\definecolor{darkgreen}{rgb}{0,0.5,0}
\newtheorem{theorem}{Theorem}[section]
\newtheorem{proposition}[theorem]{Proposition}
\newtheorem{corollary}[theorem]{Corollary}
\newtheorem{lemma}[theorem]{Lemma}
\newtheorem{definition}[theorem]{Definition}
\newtheorem{remark}[theorem]{Remark}
\newtheorem*{theorem-non}{Main Theorem}
\newtheorem*{question-non}{Question}
\theoremstyle{remark}
\newtheorem{rem}[theorem]{Remark}
\newcommand\sB{{\mathcal B}}
\newcommand{\hol}{\ensuremath{\mathcal{O}}}
\newcommand{\PP}{\ensuremath{\mathbb{P}}}
\newcommand{\FF}{\ensuremath{\mathbb{F}}}
\newcommand{\ra}{\ensuremath{\rightarrow}}
\newcommand{\Cl}{\operatorname{Cl}}
\newcommand{\Sym}{\operatorname{Sym}}
\DeclareMathOperator{\Pic}{Pic}
\DeclareMathOperator{\Ext}{Ext}
\DeclareMathOperator{\divi}{div}
\DeclareMathOperator{\Def}{Def}
\DeclareMathOperator{\PSL}{PSL}
\DeclareMathOperator{\Aut}{Aut}
\DeclareMathOperator{\diag}{diag}
\DeclareMathOperator{\rk}{rk}
\DeclareMathOperator{\GL}{GL}
\DeclareMathOperator{\cone}{cone}
\numberwithin{equation}{section}
\begin{document}
\title[Rigid Manifolds of Kodaira Dimension 1]{Fermat's Cubic, Klein's Quartic and Rigid Complex Manifolds of Kodaira Dimension One}

\author{Ingrid Bauer, Christian Gleissner}

\thanks{
\textit{2010 Mathematics Subject Classification}: 14B12, 32G07,  14L30, 14J10, 14J40, 14M25, 14M99, 14B05
32G05.\\
\textit{Keywords}: Rigid complex manifolds, deformation theory, quotient singularities, toric geometry. \\
The second author wants to thank F. Catanese, D. Frapporti, P. Graf and T. Peternell for their interest and helpful discussions.
}

\begin{abstract} 
For each  $n \geq 3$ we provide an $n$-dimensional rigid compact complex manifold of Kodaira dimension $1$. First we constructed a series of singular quotients of 
products of $(n-1)$ Fermat curves with the Klein quartic, which are rigid. Then using toric geometry a suitable resolution of singularities is constructed and the deformation theories of the singular model and of the resolutions are compared, showing the rigidity of the resolutions.
\end{abstract}

\maketitle

\section*{Introduction}
A compact complex manifold is  called {\em rigid} if it has no nontrivial deformations. 
In \cite{rigidity} several notions  of rigidity have been discussed, the relations among them have been studied and many questions and conjectures have been proposed. Among those there was the following:

\begin{question-non}
Do there exist rigid compact complex manifolds of dimension $n \geq 3$ and Kodaira dimension $1$?
\end{question-non}
The aim of this article is to give a positive answer to this question.

In fact, we construct for each $n \geq 3$ a projective manifold $\hat{X}_n$ of dimension $n$ and Kodaira dimension $1$, which is  infinitesimally rigid (which by Kuranishi theory implies that $\hat{X_n}$ is rigid, cf. Remark \ref{kuranishi}).

For this we start considering  the Klein quartic $Q$ and the Fermat cubic $F$. Both can be realized as {\em triangle curves} (i.e. Galois covers of $\PP^1$ branched on $\{0,1,\infty \}$) with group 
$$
G=\langle s,t  ~ \big\vert ~ s^3=1, ~ t^7=1,~ sts^{-1} =t^4 \rangle  \simeq \mathbb Z/7 \rtimes_{\varphi} \mathbb Z/3.
$$

For $n \geq 3$ we consider then $X_n := (F^{n-1} \times Q)/G$, where $G$ acts diagonally on the product.

It turns out that $X_n$ is a normal projective variety with isolated canonical cyclic quotient singularities, Kodaira dimension $1$ and 
$$
H^1(X_n, \Theta_{X_n}) = H^1(F^{n-1} \times Q, \Theta_{F^{n-1} \times Q})^G = 0.
$$

By Schlessinger's result \cite{schlessinger} these singularities are rigid (in dimensions $\geq 3$), hence by the local global $\Ext$ spectral sequence this implies that 
$$
H^1(X_n, \Theta_{X_n}) \simeq \Ext(\Omega^1_{X_n},\hol_{X_n}) =0.
$$
Since $\Ext(\Omega^1_{X_n},\hol_{X_n})$ is the tangent space of the base of the Kuranishi familiy $\Def(X_n)$, this shows that $X_n$ is an infinitesimally rigid (singular) variety.

Since we are looking for rigid {\em manifolds}, we construct a suitable resolution $\rho \colon \hat{X}_n \ra X_n$ of singularities and show that 
$H^1(X_n, \Theta_{X_n}) = H^1(\hat{X}_n, \Theta_{\hat{X}_n})$. Therefore the main result in our paper is
\begin{theorem-non}\label{mainintro}
Let $n \geq 3$ and let $X_n = (F^{n-1} \times Q) /G$. 
Then $X_n$ is infinitesimally rigid and there exists a  resolution of singularities $\rho \colon \hat{X}_n \to X_n$, such that
\begin{enumerate}
\item $H^1(\hat{X}_n, \Theta_{\hat{X}_n}) = 0$, i.e. $\hat{X}_n$ is infinitesimally rigid;
\item $\kappa(\hat{X}_n) = 1$.
\end{enumerate}
\end{theorem-non}

The paper is organized as follows: in the first section we recall some of the different notions of rigidity given in \cite{rigidity}, their mutual relations and some of the results established in loc.cit.. In the second paragraph we study the action  of $G$ on the curves $Q$ and $F$ and introduce the quotient varieties $X_n$. In the third paragraph we show that the $G$ action of the product $F^{n-1} \times Q$ is infinitesimally rigid, i.e. $$H^1(F^{n-1} \times Q, \Theta_{F^{n-1} \times Q})^G = 0.$$ 
Finally, in the last section, we construct a suitable resolution of singularities $\rho \colon \hat{X}_n \to X_n$, with methods from toric geometry, and show 
that $\rho$ satisfies the statements of the above theorem. Hereby we  
conclude the proof of our main theorem \ref{mainintro}. 

\section{Rigid compact complex manifolds}\label{section1}

The aim of this section is to recall the basic notions of rigidity and some of the results established in \cite{rigidity}.

We state the part of  \cite{rigidity}*{Definition 2.1}, which will be relevant for our purposes:
\begin{definition}\label{rigid} \

Let $X$ be a compact complex manifold of dimension $n$. 
\begin{enumerate}
\item A {\em deformation of $X$} is a  proper smooth holomorphic map of pairs $$f \colon (\mathfrak{X},X)  \rightarrow (\mathcal{B}, b_0)
$$ 
where $(\sB,b_0)$ is a connected (possibly not reduced) germ of a complex space.

\item $X$ is said to be  {\em  rigid}  if for each deformation of $X$,
\[
f \colon (\mathfrak{X},X)  \rightarrow (\sB, b_0)
\] 
there is an open neighbourhood $U \subset \sB$ of $b_0$ such that $X_t := f^{-1}(t) \simeq X$ for all $t \in U$.
\item  $X$ is said to be  {\em infinitesimally rigid} if 
\[
H^1(X, \Theta_X) = 0,
\]
where $\Theta_X$ is the sheaf of holomorphic vector fields on $X$.
\end{enumerate}
\end{definition}

\begin{rem}\label{kuranishi} \

1) If $X$ is infinitesimally rigid, then $X$ is also locally rigid. This follows by Kodaira-Spencer-Kuranishi theory, since $H^1(X, \Theta_X)$ is the Zariski tangent space of the germ of analytic space which is the base $\Def(X)$ of the Kuranishi semiuniversal deformation of $X$.
So, if  $H^1(X, \Theta_X) =0$, $\Def(X)$ is a reduced point and all deformations are induced by the trivial deformation. The other implication does not hold in general  as it was shown in \cite{notinfinitesimally}, compare also \cite{kodairamorrow}.

2) Observe that, as it is shown in \cite[Theorem 2.3]{rigidity}, a compact complex manifold is rigid if and only if the base of the Kuranishi family $\Def(X)$  has dimension $0$.

3) The only rigid curve is $\PP^1$.
\end{rem}

For $n=2$ the following was shown in \cite[Theorem 1.3]{rigidity}.

\begin{theorem}
Let $S$ be a smooth compact complex  surface, which is rigid. Then either
\begin{enumerate}
\item $S$ is a minimal surface of general type, or
\item $S$ is a Del Pezzo surface of degree $d \geq 5$, $\PP^2 $ or $\PP^1 \times \PP^1 = \FF_0$,
$\FF_1 = S_8$, or $S_7, S_6, S_5$; where $S_{9-r}$ is the blow-up of $\PP^2 $
in $r$ points which are in general linear position.
\item $S$ is an Inoue surface of type $S_M$ or $S_{N,p,q,r}^{(-)}$ (cf. \cite{inoue}).
\end{enumerate}
\end{theorem}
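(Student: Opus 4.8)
The plan is to run through the Enriques--Kodaira classification of compact complex surfaces and to eliminate, Kodaira dimension by Kodaira dimension, every class except the three appearing in the statement. By Remark \ref{kuranishi}(2) and Definition \ref{rigid}(2) it suffices in each excluded case to exhibit a nontrivial (positive-dimensional) deformation, equivalently to produce an effectively parametrised family containing $S$ whose generic nearby fibre is not isomorphic to $S$; for the surviving classes one only has to observe that no such family exists.

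First I would dispose of the non-minimal surfaces of nonnegative Kodaira dimension. If $S = \mathrm{Bl}_p(S')$ is the blow-up of a point $p$ on a surface $S'$, then blowing up the points near $p$ yields a two-parameter family of surfaces; since the minimal model $S'$ has finite automorphism group when $\kappa = 2$ (and one argues analogously from the structure of $\Aut(S')$ in the remaining cases), the generic nearby fibre is not isomorphic to $S$, so $S$ is not rigid. This already reduces the case $\kappa(S) = 2$ to minimal surfaces of general type, which is exactly case (1); note that we make no claim that all such surfaces are rigid, only that a rigid surface of general type is minimal.

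Next I would treat the intermediate Kodaira dimensions. For $\kappa(S) = 0$ the minimal models are $K3$ surfaces, complex tori, Enriques, bielliptic and, in the non-K\"ahler range, Kodaira surfaces; in each instance $H^1(S,\Theta_S) \neq 0$ and the Kuranishi family is visibly positive-dimensional (the $20$-dimensional period domain for $K3$'s, the moduli of the defining lattice or group data otherwise), so none is rigid. For $\kappa(S) = 1$ the surface carries a canonical elliptic fibration $f\colon S \to B$, and one deforms $S$ by varying the complex structure of the base curve $B$ together with the $j$-invariant of the fibration while keeping the multiple fibres, producing a nontrivial family and hence non-rigidity. This last point is precisely the phenomenon that the present paper shows breaks down in dimension $\geq 3$.

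Finally I would handle $\kappa(S) = -\infty$. In the algebraic case $S$ is rational or ruled: a ruled surface over a base of genus $\geq 1$ deforms together with its base curve, while for rational surfaces one passes to a relatively minimal model and checks that $\PP^2$, $\FF_0$ and $\FF_1$ are rigid, that $\FF_n$ with $n \geq 2$ deforms to $\FF_{n-2}$, and that the blow-up of $\PP^2$ in $r$ points is rigid precisely when $r \leq 4$ (for $r \leq 4$ points in general position are projectively unique, whereas $r \geq 5$ points carry cross-ratio moduli); this produces exactly the list in case (2). The remaining, and genuinely hardest, case is that of the non-algebraic class VII surfaces with $b_1 = 1$: here one must compute $H^1(S,\Theta_S)$ for Hopf, Inoue and global-spherical-shell surfaces, showing that Hopf and GSS surfaces always admit nontrivial deformations (through their defining contracting germs, respectively the moduli of the underlying configuration) while the Inoue surfaces of type $S_M$ and $S^{(-)}_{N,p,q,r}$ satisfy $H^1(S,\Theta_S) = 0$ and are therefore infinitesimally, hence actually, rigid. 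I expect this class VII analysis --- establishing the vanishing for the two Inoue families and the non-vanishing for every other class VII surface --- to be the main obstacle, since it rests on explicit computations with the non-K\"ahler structure rather than on the general moduli theory available in the K\"ahler range.
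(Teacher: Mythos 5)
You should first be aware that the paper you are reading does not prove this theorem at all: it is quoted verbatim from \cite[Theorem 1.3]{rigidity} (Bauer--Catanese), so the only thing to compare your proposal with is the proof given there, which, like yours, runs through the Enriques--Kodaira classification. Your overall strategy and most individual steps are the right ones (non-minimal surfaces with $\kappa\ge 0$, the $\kappa=0$ classes, ruled surfaces over positive genus, the rational list $\PP^2$, $\FF_0$, $\FF_1=S_8$, $S_7,S_6,S_5$, and Hopf versus Inoue among class VII surfaces with $b_2=0$, where the classification into Hopf and Inoue surfaces is an unconditional theorem of Bogomolov/Li--Yau--Zheng/Teleman).

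There are, however, two genuine gaps. The serious one is your treatment of class VII surfaces with $b_2>0$: you propose to compute $H^1(S,\Theta_S)$ for ``global-spherical-shell surfaces'', but the assertion that every minimal class VII surface with $b_2>0$ is a GSS (Kato) surface is the global spherical shell \emph{conjecture}, which is open, so an argument routed through that enumeration proves nothing unconditionally. The fix needs no classification at all: for a minimal class VII surface one has $\chi(\hol_S)=0$, hence $c_1^2=-c_2=-b_2$, and Riemann--Roch gives $\chi(\Theta_S)=\tfrac{1}{6}(7c_1^2-5c_2)=-2b_2<0$; since the Kuranishi space satisfies $\dim\Def(S)\ge h^1(S,\Theta_S)-h^2(S,\Theta_S)\ge-\chi(\Theta_S)=2b_2>0$, no such surface is rigid. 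Note that this lower bound on $\dim\Def(S)$, rather than mere non-vanishing of $H^1(S,\Theta_S)$, is what you need throughout: as this very paper emphasizes, $H^1(S,\Theta_S)\neq 0$ by itself does not contradict rigidity in the sense of Definition \ref{rigid}(2). The second gap is the case $\kappa(S)=1$: ``varying the base and the $j$-invariant'' says nothing for isotrivial properly elliptic surfaces $S=(C\times E)/G$ whose base orbifold is a rigid triangle curve --- precisely the mechanism the surrounding paper exploits to build rigid examples in dimension $\ge 3$ --- and there the non-rigidity has to be extracted separately (by deforming $E$ as a curve with $G$-action, or by showing that freeness of the diagonal action forces moduli on the $C$-side). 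Since the whole point of the theorem is that $\kappa=1$ is excluded in dimension two but not in higher dimension, this case deserves an actual argument rather than one sentence.
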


In particular, a rigid compact complex surface has Kodaira dimension $- \infty$ or $2$. That this is a phenomenon in low dimensions and that in higher dimensions rigid manifolds are much more frequent has already been observed in \cite[Theorem 1.4]{rigidity}. In fact, the following is shown:
\begin{theorem} \
\begin{enumerate}
\item
For all $n \geq 3$ and $2 \leq k \leq n$ there is a rigid n-dimensional compact complex manifold $X_{n,k}$ of Kodaira dimension $k$. 
\item
For all $n \geq 4$ there exists a rigid $n$-dimensional compact complex manifold $X_n$ of Kodaira dimension $0$.
\end{enumerate}
\end{theorem}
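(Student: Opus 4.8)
The plan is to realize each $X_{n,k}$ as a resolution of a quotient $(C_1 \times \cdots \times C_n)/G$ of a product of $n$ curves by a finite group $G$ acting diagonally, exactly in the spirit of the construction $(F^{n-1}\times Q)/G$ used below for Kodaira dimension $1$. Two elementary facts drive the reduction. First, Kodaira dimension is additive on products and is preserved both by the finite quotient $T \to T/G$ and, since the singularities will be canonical, by resolution; and for a smooth curve $C$ one has $\kappa(C)=1,0,-\infty$ according as $g(C)\geq 2$, $g(C)=1$, $g(C)=0$. Hence, using only factors of genus $\geq 1$ (no rational factors), $\kappa\big((C_1 \times \cdots \times C_n)/G\big)$ equals the number of factors of genus $\geq 2$. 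So to produce an $n$-fold of Kodaira dimension $k$ I would take $k$ factors of genus $\geq 2$ and $n-k$ elliptic factors. Second, once the $G$-action on $T := C_1 \times \cdots \times C_n$ has only isolated fixed points, $X := T/G$ has isolated cyclic quotient singularities and the rigidity of (a resolution of) $X$ reduces to the single vanishing $H^1(T,\Theta_T)^G = 0$.

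The building blocks are chosen to be rigid as $G$-curves. For the genus $\geq 2$ factors I would use triangle curves, i.e. Galois covers $C_i \to \PP^1$ branched over $\{0,1,\infty\}$: since the three-pointed projective line has no moduli, such a curve carries no $G$-invariant Teichmüller deformation, that is $H^1(C_i,\Theta_{C_i})^G = 0$. For the elliptic factors I would use curves with extra automorphisms, e.g. the equianharmonic curve $E = \CC/\ZZ[\omega]$ with the order-three automorphism $z \mapsto \omega z$, so that $G$ acts on $E$ with isolated fixed points and acts by a nontrivial character on $H^1(E,\Theta_E)$. Realizing all factors (including the elliptic ones, as is done here for $F$) as $G$-covers of $\PP^1$ lets $G$ act diagonally on $T$ with only isolated fixed points, so that $X = T/G$ has isolated cyclic quotient singularities and $\kappa(X)=k$.

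The heart of the argument, and the step I expect to be the main obstacle, is the vanishing $H^1(T,\Theta_T)^G = 0$. Writing $\Theta_T = \bigoplus_i \pi_i^*\Theta_{C_i}$ and applying the Künneth formula, $H^1(T,\Theta_T)$ splits into diagonal summands $H^1(C_i,\Theta_{C_i})$ and cross summands $H^0(C_i,\Theta_{C_i}) \otimes H^1(C_m,\Oh_{C_m})$ for $i \neq m$. The diagonal terms are killed on invariants by the triangle-curve property (for the genus $\geq 2$ factors) and by the nontrivial automorphism action (for the elliptic factors). The cross terms are the delicate ones: they are nonzero precisely when $C_i$ is elliptic (so $H^0(\Theta_{C_i}) \neq 0$) and $C_m$ has positive genus (so $H^1(\Oh_{C_m}) \neq 0$), and killing their $G$-invariants amounts to the representation-theoretic condition that the character of $G$ on the invariant vector field of the elliptic factor $C_i$ does not occur in $H^1(\Oh_{C_m})$. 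Checking that $G$ and the branch data can be chosen so that all of these conditions hold simultaneously is the computational core, and it is here that the numerical hypotheses enter: in (1) the presence of $k \geq 2$ general-type factors (with no cross terms at all in the extreme case $k=n$) makes the bookkeeping close, whereas $n \geq 4$ in (2) is the threshold below which the all-elliptic case, in which every unordered pair of factors contributes a cross term, cannot be made to vanish.

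Once $H^1(T,\Theta_T)^G = 0$ is established, I would conclude exactly as in the Introduction. The isolated quotient singularities of $X$ have dimension $n \geq 3$, hence are rigid by Schlessinger's theorem, so the local-to-global $\Ext$ spectral sequence yields $H^1(X,\Theta_X) \cong \Ext^1(\Omega^1_X,\Oh_X) = H^1(T,\Theta_T)^G = 0$; thus $X$ is infinitesimally rigid. Finally I would construct a resolution $\rho\colon \hat X \to X$, by toric methods adapted to the cyclic quotient singularities, chosen so that it changes neither the Kodaira dimension (the singularities being canonical) nor the infinitesimal deformations, i.e. $H^1(\hat X,\Theta_{\hat X}) = H^1(X,\Theta_X) = 0$. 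The resulting $\hat X$ is then a rigid $n$-fold of Kodaira dimension $k$, for any $2 \le k \le n$ in case (1) and for $k = 0$ with $n \geq 4$ in case (2).
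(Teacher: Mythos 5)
This theorem is not proved in the present paper at all: it is quoted verbatim from \cite[Theorem 1.4]{rigidity}, and the relevant point of comparison is that the proof there uses quotients of products of curves by \emph{free} diagonal actions, so that $X_{n,k}=(C_1\times\cdots\times C_n)/G$ is already smooth and the whole question reduces to the character computation $H^1(C_1\times\cdots\times C_n,\Theta)^G=0$. Your plan instead allows isolated fixed points and then resolves, which is exactly the extra machinery this paper has to build for the $\kappa=1$ case (where free actions provably cannot work). That is a legitimate alternative route, but it makes the problem strictly harder than it needs to be for $2\le k\le n$ and for $\kappa=0$, $n\ge 4$.

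As a standalone argument your proposal has two genuine gaps. First, the ``computational core'' is never carried out: you do not exhibit a group $G$, curves $C_i$, and branch data for which every Künneth summand --- in particular every cross term $H^0(\Theta_{C_i})\otimes H^1(\mathcal O_{C_m})$ --- has no $G$-invariants. Identifying the right condition is not the same as verifying that it can be satisfied for all $(n,k)$ in the stated range; the entire content of the theorem lives in that verification (compare Proposition \ref{rigidG}, where the identity $\chi_{\epsilon^2}\chi_{\overline{\eta}}=\chi_{\overline{\eta}}$ is what saves the day for one specific choice of $G$ and curves). Second, you assert that a resolution can be ``chosen so that it changes neither the Kodaira dimension nor the infinitesimal deformations.'' The second property is precisely the pair of conditions $\rho_{\ast}\Theta_{\hat X}=\Theta_X$ and $R^1\rho_{\ast}\Theta_{\hat X}=0$ of Proposition \ref{resproposition}, and Remark \ref{resrem} shows these are \emph{not} automatic (they fail already for the blowup of a smooth point of $\mathbb C^2$ and for every ADE surface singularity); the paper needs all of Section 4 to establish them for the two specific singularity types $\frac13(1,\ldots,1)$ and $\frac13(1,\ldots,1,2)$. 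Your proposal would need an analogous analysis for whatever singularities your chosen actions produce --- or, better, you should follow the cited proof and choose the actions free, which eliminates this step entirely. Your side remark that $n\ge 4$ in part (2) is ``the threshold below which the all-elliptic case cannot be made to vanish'' is also not right as stated for your non-free setup: Beauville's rigid threefold of Kodaira dimension $0$, mentioned in Section \ref{section1}, is exactly a resolution of a non-free quotient of a product of three elliptic curves.
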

That there exist rigid threefolds of Kodaira dimension $0$ had already been shown by A. Beauville (cf. \cite{beauville}).

The existence of rigid  $n$-dimensional complex manifolds of Kodaira dimension $1$ was conjectured, but up to now no examples have been known. The aim of this paper is to give for all 
$n \geq 3$ such an example, i.e. our main result is the following

\begin{theorem}\label{main}
For each $n\geq 3$ there is a $n$-dimensional infinitesimally rigid compact complex manifold $\hat{X}_n$ of Kodaira dimension $1$.
\end{theorem}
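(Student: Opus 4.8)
The plan is to realize both the Fermat cubic $F$ and the Klein quartic $Q$ as triangle curves for the group $G \simeq \mathbb{Z}/7 \rtimes \mathbb{Z}/3$ of order $21$, and to control the diagonal quotient $X_n = (F^{n-1}\times Q)/G$ at every stage of its construction. First I would pin down the two branching signatures by Riemann--Hurwitz: for $g(F)=1$ the only possibility is the type $(3,3,3)$, while $g(Q)=3$ forces $(3,3,7)$. From the explicit generators of $G$ I would list the elements with fixed points on each curve (those of order $3$ on $F$; those of order $3$ and $7$ on $Q$) and record the eigenvalue by which a generator of each stabilizer acts on the tangent line. Since a nontrivial automorphism of a curve has only isolated fixed points, the diagonal action on the product fixes only tuples of such points, so every fixed locus is isolated and every stabilizer cyclic; reading off the product of the local weights identifies each singular point as a cyclic quotient singularity $\tfrac{1}{d}(a_1,\dots,a_n)$, and I would verify that these are canonical (this is where $n\geq 3$ enters, via Schlessinger). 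The Kodaira dimension of $X_n$ is then $\kappa(F^{n-1}\times Q)=(n-1)\kappa(F)+\kappa(Q)=1$ by additivity over products, invariant under the finite quotient, with Iitaka fibration induced by the projection $X_n \ra Q/G \simeq \PP^1$.

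The heart of the singular case is the vanishing $H^1(X_n,\Theta_{X_n})=H^1(F^{n-1}\times Q,\Theta_{F^{n-1}\times Q})^G=0$. I would compute the right-hand side by splitting $\Theta_{F^{n-1}\times Q}=\bigoplus_i p_i^*\Theta_F \oplus q^*\Theta_Q$ and applying the Künneth formula to each summand, tracking the $G$-action. Because $Q$ has genus $\geq 2$, $H^0(Q,\Theta_Q)=0$ annihilates every cross term attached to $q^*\Theta_Q$ except $H^1(Q,\Theta_Q)$ itself, whose $G$-invariants vanish precisely because a triangle curve is infinitesimally rigid; the analogous rigidity $H^1(F,\Theta_F)^G=0$ follows from $H^1(F,\Theta_F)\cong H^0(F,\Omega_F^{\otimes 2})^{\ast}$ carrying the character $\chi^{-2}$, where $\chi$ is the character of $G$ on $H^0(F,\Omega_F)$: it is nontrivial since $H^0(F,\Omega_F)^G=H^0(\PP^1,\Omega)=0$, hence $\chi^2\neq 1$ as $|G|=21$ is odd. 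The remaining terms are cross products such as $H^0(F,\Theta_F)\otimes H^1(\text{other factor},\hol)$; to show these have no invariants I would determine the $G$-representations on $H^0(F,\Omega_F)$ and on $H^0(Q,\Omega_Q)$ explicitly via the Chevalley--Weil formula and check that no tensor combination contains the trivial character. Granting this, $H^1(X_n,\Theta_{X_n})=0$, and the local--global $\Ext$ argument recalled in the introduction yields infinitesimal rigidity of the singular variety $X_n$.

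The genuinely delicate step, and the one I expect to be the main obstacle, is passing from $X_n$ to a smooth model without creating new deformations. The plan is to resolve each cyclic quotient singularity by a carefully chosen toric refinement $\rho\colon \hat X_n \ra X_n$ and then to compare the two deformation theories through the Leray spectral sequence for $\rho$, using the exact sequence $0 \ra H^1(X_n,\rho_*\Theta_{\hat X_n}) \ra H^1(\hat X_n,\Theta_{\hat X_n}) \ra H^0(X_n,R^1\rho_*\Theta_{\hat X_n})$. Everything reduces to two local statements at the toric singular points: that $\rho_*\Theta_{\hat X_n}=\Theta_{X_n}$, so the left term is $H^1(X_n,\Theta_{X_n})=0$, and that $R^1\rho_*\Theta_{\hat X_n}=0$, so the right term vanishes as well; the latter says the chosen resolution admits no infinitesimal deformations of its exceptional configuration. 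Both are combinatorial computations of the tangent sheaf on the toric resolution of a cyclic quotient singularity, and the subtlety is to select the fan subdivision so that these vanishings genuinely hold for every $n\geq 3$. Once they are established one concludes $H^1(\hat X_n,\Theta_{\hat X_n})=0$.

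Finally, the Kodaira dimension is preserved essentially for free: since $X_n$ has canonical singularities, $K_{\hat X_n}=\rho^*K_{X_n}+\sum a_iE_i$ with all $a_i\geq 0$, whence $H^0(\hat X_n,mK_{\hat X_n})=H^0(X_n,mK_{X_n})$ for every $m$, giving $\kappa(\hat X_n)=\kappa(X_n)=1$. Combining this with $H^1(\hat X_n,\Theta_{\hat X_n})=0$ completes the proof.
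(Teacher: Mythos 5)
Your proposal follows essentially the same route as the paper: the same diagonal quotient $X_n=(F^{n-1}\times Q)/G$, the same K\"unneth/character computation showing $H^1(F^{n-1}\times Q,\Theta)^G=0$, the same reduction via the Leray five-term sequence to the two local toric statements $\rho_*\Theta_{\hat X_n}=\Theta_{X_n}$ and $R^1\rho_*\Theta_{\hat X_n}=0$, and the same canonical-singularities argument for $\kappa(\hat X_n)=1$. The only substantive work you defer --- exhibiting a concrete fan subdivision (the paper uses the star/Danilov subdivisions) and verifying the two vanishings combinatorially --- is exactly what the paper's Section 4 carries out, and your framing of that step is correct.
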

 As an immediate consequence we get the following
 
 \begin{corollary}
 There are rigid, but not infinitesimally rigid, manifolds of dimension $n \geq 5$ and Kodaira dimension $3$.
 \end{corollary}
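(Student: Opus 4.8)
The plan is to build the examples as products, combining Theorem \ref{main} with a known rigid, non-infinitesimally rigid surface. Fix $m \geq 3$ and let $\hat{X}_m$ be the manifold produced by Theorem \ref{main}: it is infinitesimally rigid, hence rigid by Remark \ref{kuranishi}(1), of dimension $m$ and with $\kappa(\hat{X}_m)=1$. By \cite{notinfinitesimally} there is a minimal surface of general type $S$ which is rigid but \emph{not} infinitesimally rigid, so that $\kappa(S)=2$ and $H^1(S,\Theta_S)\neq 0$. I would then set
\[
Z := S \times \hat{X}_m ,
\]
a projective manifold of dimension $m+2 \geq 5$. Since the pluricanonical spaces of a product split as tensor products by the K\"unneth formula, Kodaira dimension is additive on products, so $\kappa(Z)=\kappa(S)+\kappa(\hat{X}_m)=2+1=3$. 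Thus $Z$ already has the right dimension and Kodaira dimension, and as $m$ ranges over $m\geq 3$ one obtains every $n=m+2\geq 5$.

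First I would check that $Z$ is \emph{not} infinitesimally rigid. Writing $p_1,p_2$ for the two projections, one has $\Theta_Z \cong p_1^*\Theta_S \oplus p_2^*\Theta_{\hat{X}_m}$, and the K\"unneth formula exhibits $H^1(S,\Theta_S)\otimes H^0(\hat{X}_m,\hol_{\hat{X}_m})$ as a direct summand of $H^1(Z,\Theta_Z)$. As $\hat{X}_m$ is connected, $H^0(\hat{X}_m,\hol_{\hat{X}_m})=\CC$, so this summand equals $H^1(S,\Theta_S)\neq 0$; hence $H^1(Z,\Theta_Z)\neq 0$. Note that this step is unconditional: it uses only that $\hat{X}_m$ is connected and that $S$ fails to be infinitesimally rigid.

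The real point, and the hard part, is to show that $Z$ is nevertheless \emph{rigid}. The strategy is to prove that a finite product of rigid compact complex manifolds is again rigid, and to apply it to the two rigid factors $S$ and $\hat{X}_m$: every small deformation of $Z=S\times\hat{X}_m$ should remain a product of small deformations of the two factors, which are intrinsically recovered from the geometry of the pluricanonical maps ($S$ being of general type, while $\hat{X}_m$ carries the Iitaka fibration of Kodaira dimension one), and by rigidity of each factor these deformations are trivial, so the nearby fibres are all isomorphic to $Z$. To make this transparent I would, equivalently, use Remark \ref{kuranishi}(2) and check that $\Def(Z)$ is zero-dimensional: taking $S$ with $q(S)=H^1(S,\hol_S)=0$ (as may be arranged in the examples of \cite{notinfinitesimally}), the three remaining K\"unneth summands of $H^1(Z,\Theta_Z)$ all vanish, namely $H^0(S,\Theta_S)\otimes H^1(\hat{X}_m,\hol_{\hat{X}_m})$ because a surface of general type satisfies $H^0(S,\Theta_S)=0$, $H^1(\hat{X}_m,\Theta_{\hat{X}_m})\otimes H^0(S,\hol_S)$ because $\hat{X}_m$ is infinitesimally rigid, and $H^0(\hat{X}_m,\Theta_{\hat{X}_m})\otimes H^1(S,\hol_S)$ because $q(S)=0$. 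Hence $H^1(Z,\Theta_Z)=H^1(S,\Theta_S)$, and the product deformation map $\Def(S)\to\Def(Z)$, sending $S_t$ to $S_t\times\hat{X}_m$, is an isomorphism on tangent spaces. The main obstacle is precisely to upgrade this to an isomorphism of Kuranishi germs, i.e. to rule out that the obstructed infinitesimal deformations coming from the non-infinitesimally rigid factor $S$ acquire unexpected integrations inside $Z$; once the product structure is shown to persist under deformation, rigidity of $S$ forces $\Def(Z)$ to be a reduced point, so $Z$ is rigid but not infinitesimally rigid, completing the corollary.
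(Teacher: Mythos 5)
Your construction is the same as the paper's: take a rigid but not infinitesimally rigid regular surface of general type $S$ from \cite{notinfinitesimally} (the paper uses the surfaces $S_d$ there, which satisfy $q(S_d)=0$ and $H^1(S_d,\Theta_{S_d})\simeq\CC^6$) and form the product with $\hat{X}_m$; the K\"unneth computation showing $H^1(Z,\Theta_Z)\simeq H^1(S,\Theta_S)\neq 0$ is exactly the paper's argument for failure of infinitesimal rigidity. The one step you leave open --- upgrading the tangent-space identification to an isomorphism of Kuranishi germs, so that rigidity of the factors gives rigidity of the product --- is precisely what the paper settles by citing \cite[Lemma 5.2]{notinfinitesimally}, which asserts $\Def(S\times\hat{X}_m)=\Def(S)\times\Def(\hat{X}_m)$; the vanishing of the three K\"unneth cross-terms that you verify (using $H^0(S,\Theta_S)=0$, $q(S)=0$ and infinitesimal rigidity of $\hat{X}_m$) is exactly the hypothesis under which that lemma applies, so no separate argument via pluricanonical maps or the Iitaka fibration is needed. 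With that citation supplied, your proof is complete and coincides with the paper's.
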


 This complements the result of \cite[Theorem 5.1]{notinfinitesimally}
\begin{theorem}\label{higherdim}
There are rigid, but not infinitesimally rigid, manifolds of dimension $n$ and Kodaira dimension $\kappa$ for all possible pairs $(n,\kappa)$ with $n \geq 5$ and $\kappa \neq 0,1,3$ and for 
$(n,\kappa)=(3,-\infty),(4,-\infty),(4,4)$.
\end{theorem}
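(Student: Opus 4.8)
The plan is to prove the statement not by a single construction but by combining a short list of explicitly built \emph{seed} manifolds, each already rigid but not infinitesimally rigid, with a product technique that propagates both properties while adjusting dimension and Kodaira dimension. The guiding principle is the additivity $\kappa(Z\times Y)=\kappa(Z)+\kappa(Y)$: multiplying a seed by suitable infinitesimally rigid factors sweeps out an entire range of pairs $(n,\kappa)$, and the excluded values $\kappa\in\{0,1,3\}$ should appear precisely as those \emph{not} representable as such a sum with the available building blocks. In particular $\kappa=1$ is singled out because the missing ingredient is an infinitesimally rigid factor of Kodaira dimension $1$ — exactly the gap closed by the present paper.

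The first step is a product lemma. Let $Z$ be rigid but not infinitesimally rigid and let $Y$ be infinitesimally rigid with $H^1(Y,\Oh_Y)=0$ and $H^0(Y,\Theta_Y)=0$ (no irregularity and no holomorphic vector fields). Since $\Theta_{Z\times Y}=p_1^\ast\Theta_Z\oplus p_2^\ast\Theta_Y$, the Künneth formula gives
\[
H^1(Z\times Y,\Theta_{Z\times Y})=\bigoplus_{a+b=1}\Big(H^a(Z,\Theta_Z)\otimes H^b(Y,\Oh_Y)\Big)\ \oplus\ \bigoplus_{a+b=1}\Big(H^a(Z,\Oh_Z)\otimes H^b(Y,\Theta_Y)\Big),
\]
and the three hypotheses collapse this to $H^1(Z,\Theta_Z)\neq 0$; hence $Z\times Y$ is again not infinitesimally rigid. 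The harder half is genuine rigidity: by Remark \ref{kuranishi} it suffices to show that every small deformation of $Z\times Y$ is biholomorphic to $Z\times Y$. Here I would exploit the structure of the factor $Y$ — recovering it on nearby fibres through the appropriate canonical fibration (pluricanonical, Albanese, or maximal rationally connected, according to $\kappa(Y)$) — to show that the product decomposition persists under small deformations, after which the rigidity of each factor forces the nearby fibre to be $Z\times Y$ itself.

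The second step is to produce the seeds. The scheme above shows that the attainable list is generated once one has a seed of Kodaira dimension $-\infty$ (yielding $(3,-\infty)$ and, after multiplying by copies of $\PP^1$, all $(n,-\infty)$ with $n\ge 3$) together with seeds of finite Kodaira dimension: a general type example accounting for $(4,4)$ and, via products with infinitesimally rigid general type factors, the range $\kappa\ge 4$; plus a further seed realizing $\kappa=2$ so as to reach $\kappa=2$ for $n\ge 5$. Each seed $Z$ is to be built so that $H^1(Z,\Theta_Z)\neq 0$ while $\Def(Z)$ is a fat point: concretely one writes down a candidate, computes the obstruction map $H^1(Z,\Theta_Z)\to H^2(Z,\Theta_Z)$, and verifies that every nonzero first-order deformation is obstructed, so that the reduced Kuranishi base is a single point.

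The main obstacle is exactly this last verification — exhibiting manifolds whose Kuranishi family has nonreduced but zero-dimensional base. Computing $H^1(\Theta)$ is routine once a model is fixed, but proving that \emph{all} infinitesimal deformations fail to integrate (not merely the generic one) requires genuine control of the quadratic part of the Kuranishi map, and, symmetrically, the rigidity of the products requires showing that no deformation can break the product structure; these two points carry the real content, while the bookkeeping over $(n,\kappa)$ and the additivity of $\kappa$ are formal. The pattern of excluded Kodaira dimensions is then explained structurally: with the available seeds and infinitesimally rigid factors no combination has Kodaira dimension $0$, $1$ or $3$, the case $\kappa=1$ being conspicuous precisely because infinitesimally rigid factors of Kodaira dimension $1$ had not yet been constructed.
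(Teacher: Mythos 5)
You have correctly guessed the mechanism, but you should know that this theorem is not proved in the present paper at all: it is quoted from \cite[Theorem 5.1]{notinfinitesimally}, and the only argument of this kind carried out here is the Corollary producing $(n,3)$ for $n\geq 5$, which displays the whole method. That method uses exactly \emph{one} seed, the rigid but not infinitesimally rigid \emph{surface} $S_d$ (regular, of general type, so $\kappa=2$, with $H^1(S_d,\Theta_{S_d})\simeq \CC^6$) constructed in \cite{notinfinitesimally}; every listed pair $(n,\kappa)$ is then $S_d\times Y$ with $Y$ taken from the known infinitesimally rigid supply: $\PP^1$ and rigid Del Pezzo surfaces ($\kappa(Y)=-\infty$, giving $(3,-\infty)$ and $(4,-\infty)$), Beauville surfaces ($\kappa(Y)=2$, giving $(4,4)$), Beauville's rigid threefold with $\kappa=0$ \cite{beauville} (giving $(5,2)$), and the manifolds of \cite[Theorem 1.4]{rigidity}. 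Against this, your proposal has a genuine gap where all the content lies: the seeds. You reduce the theorem to ``write down a candidate and verify that every nonzero first-order deformation is obstructed,'' but no candidate is produced, and this verification — control of the full Kuranishi map, not just a cohomology computation — is precisely the hard part of \cite{notinfinitesimally}. Worse, your multi-seed scheme both multiplies this unproved work and leaves holes in the bookkeeping: a four-dimensional general type seed cannot reach $(5,4)$ (the complementary factor would have to be a rigid curve with $\kappa=0$, i.e.\ an elliptic curve, which is not rigid), whereas $S_d\times X_{3,2}$ does; likewise $(5,2)$ needs Beauville's threefold, which your scheme never invokes; and a three-dimensional $\kappa=-\infty$ seed times copies of $\PP^1$ only yields $\kappa=-\infty$, so it buys nothing a surface seed times $\PP^m$ would not.

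The second gap is your treatment of rigidity of the product. Recovering the factor on nearby fibres through the pluricanonical, Albanese or MRC fibration, and showing the product decomposition ``persists,'' is both unnecessary and seriously incomplete as stated: one would have to show the fibration deforms and remains locally trivial over a possibly non-reduced base germ, which is delicate even in the general type case and quite unclear for $\kappa=-\infty$ factors, and you give no argument. The standard tool — used verbatim in this paper's Corollary — is the splitting of the Kuranishi family, $\Def(Z\times Y)\simeq \Def(Z)\times\Def(Y)$ (\cite[Lemma 5.2]{notinfinitesimally}), which holds under the very vanishings you already impose (they kill the cross terms $H^0(\Theta_Z)\otimes H^1(\Oh_Y)$ and $H^1(\Oh_Z)\otimes H^0(\Theta_Y)$ in K\"unneth). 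Since both factors are rigid, each $\Def$ has dimension $0$, hence so does $\Def(Z\times Y)$, and rigidity follows from \cite[Theorem 2.3]{rigidity}, cf.\ Remark \ref{kuranishi}. With that replacement, and with the single seed $S_d$ imported rather than postulated, your outline collapses onto the actual proof; without them it is a reduction of the theorem to its hardest unproved ingredient.
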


The idea to construct the infiniesimally rigid examples with Kodaira dimension $1$ is (similarly as in \cite{rigidity}) to consider finite  quotients of smooth compact complex manifolds with respect to a rigid holomorphic group action. In the above quoted paper it was sufficient to consider free actions, so the quotient was smooth. If we drop the freeness assumption of the action, under mild assumptions it is still true that the quotient is infinitesimal rigid (in dimension at least three), but since we are interested in infinitesimally rigid manifolds, we have to compare the infinitesimal deformations of the quotient with those of a suitable resolution of singularities.

We are going to be more precise:

\begin{definition}
Let $Y$ be a compact complex manifold and $G$ be a finite group of automorphisms acting on $Y$. We say that the group action is {\em infinitesimally rigid} if and only if 
$H^1(Y,\Theta_Y)^G=0$. 
\end{definition}
\begin{remark}\label{tracemap}
There is a natural sheaf homomorphism  $ \pi_{\ast} \Theta_Y \to \Theta_X$ called the trace map.
If $G$ acts freely in codimension one, then the trace map  induces an isomorphism $ (\pi_{\ast} \Theta_Y)^G  \simeq \Theta_X$, in particular
$H^i(X,\Theta_X) \simeq H^i(Y,\Theta_Y)^G$. 
\end{remark}

\begin{proposition}\label{5term}
Let $Y$ be a projective manifold and $G$ be a finite holomorphic group action which is free in codimension one. Let $\rho \colon \hat{X} \to X$ be a resolution of the quotient 
$X=Y/G$ such that 
\begin{enumerate}
\item $\rho_{\ast} \Theta_{\hat{X}}  =  \Theta_{X}$, 
\item $R^1 \rho_{\ast} \Theta_{\hat{X}}=0$,
\end{enumerate}
then $H^1(\hat{X},\Theta_{\hat{X}}) \simeq H^1(Y,\Theta_Y)^G$.  

In particular,  $\hat{X}$ is infinitesimally rigid if and only if the $G$-action on $Y$ is infinitesimally rigid. 
\end{proposition}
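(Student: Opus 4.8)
The plan is to compare the cohomology of the tangent sheaf upstairs on $\hat{X}$ with that of the tangent sheaf of the singular quotient $X$ by means of the Leray spectral sequence of the resolution $\rho$, and then to transport the latter to the $G$-invariant cohomology on $Y$ using the trace map of Remark \ref{tracemap}. This two-step comparison is exactly what the label ``5term'' anticipates.

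First I would write down the Leray spectral sequence
\[
E_2^{p,q} = H^p(X, R^q\rho_{\ast}\Theta_{\hat{X}}) \Rightarrow H^{p+q}(\hat{X}, \Theta_{\hat{X}})
\]
attached to the proper morphism $\rho$ and the coherent sheaf $\Theta_{\hat{X}}$, and extract its exact sequence of low-degree terms
\begin{multline*}
0 \to H^1(X, \rho_{\ast}\Theta_{\hat{X}}) \to H^1(\hat{X}, \Theta_{\hat{X}}) \to H^0(X, R^1\rho_{\ast}\Theta_{\hat{X}}) \\
\to H^2(X, \rho_{\ast}\Theta_{\hat{X}}) \to H^2(\hat{X}, \Theta_{\hat{X}}).
\end{multline*}
Here I use that $X$ is a projective variety, so that the cohomology of coherent sheaves on the singular $X$ (including the tangent sheaf $\Theta_X$) is well defined and the spectral sequence is available.

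Next I would feed in the two hypotheses. By (2) we have $R^1\rho_{\ast}\Theta_{\hat{X}} = 0$, so the third term $H^0(X, R^1\rho_{\ast}\Theta_{\hat{X}})$ vanishes; the five-term sequence then forces the Leray edge map $H^1(X, \rho_{\ast}\Theta_{\hat{X}}) \to H^1(\hat{X}, \Theta_{\hat{X}})$ to be an isomorphism. By (1) we may replace $\rho_{\ast}\Theta_{\hat{X}}$ by $\Theta_X$, whence $H^1(\hat{X}, \Theta_{\hat{X}}) \simeq H^1(X, \Theta_X)$. Finally, since the $G$-action is free in codimension one, Remark \ref{tracemap} identifies $H^1(X, \Theta_X) \simeq H^1(Y, \Theta_Y)^G$, and composing the two isomorphisms yields the claimed $H^1(\hat{X}, \Theta_{\hat{X}}) \simeq H^1(Y, \Theta_Y)^G$. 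The ``in particular'' is then immediate: by Definition \ref{rigid} the manifold $\hat{X}$ is infinitesimally rigid precisely when the left-hand side vanishes, and by definition this happens exactly when the $G$-action on $Y$ is infinitesimally rigid.

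Since every ingredient is supplied by the hypotheses and by Remark \ref{tracemap}, there is no serious obstacle internal to this argument; the only point requiring genuine care is the bookkeeping of the edge morphisms in the five-term sequence, i.e. checking that the map made into an isomorphism really is the Leray edge map, so that vanishing of the $H^0$-term delivers both injectivity and surjectivity. The substantive difficulty is of course displaced into producing a resolution $\rho$ that actually satisfies conditions (1) and (2); this is the content of the toric analysis of the cyclic quotient singularities of $X_n$ carried out elsewhere in the paper, and it lies outside the present statement.
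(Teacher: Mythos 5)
Your argument is exactly the paper's: the five-term exact sequence of the Leray spectral sequence for $\rho$, with hypothesis (2) killing the $E_2^{0,1}$-term and hypothesis (1) identifying $\rho_{\ast}\Theta_{\hat{X}}$ with $\Theta_X$, followed by the trace-map identification of Remark \ref{tracemap}. The proposal is correct and coincides with the paper's proof.
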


\begin{proof}
The five term exact sequence of Leray's spectral sequence 
$$E^{p,q}_2 :=H^p(X,R^q \rho_{\ast} \Theta_{\hat{X}}) \implies H^{p+q}(\hat{X}, \Theta_{\hat{X}})$$ reads
$$
0 \to E^{1,0}_2 \to H^1(\hat{X}, \Theta_{\hat{X}}) \to E^{0,1}_2 \to E^{2,0}_2 \to H^2(\hat{X}, \Theta_{\hat{X}}). 
$$
Under the assumptions made,  it yields an isomorphism 
$H^1(X, \Theta_{\hat{X}}) \simeq H^1(X, \Theta_{X}) $ and the claim follows using Remark \ref{tracemap}. 
\end{proof}

\newpage

\section{The quotient varieties $X_n$}

 {\bf The Klein quartic and the Fermat cubic}

In \cite{Klein} Felix Klein studied a remarkable symmetric smooth plane quartic
\[
Q=\lbrace x_0^3x_1 + x_1^3x_2 + x_2^3x_0=0 \rbrace \subset \mathbb P_{\mathbb C}^2.
\]
This curve of genus $3$ was the first example realizing the Hurwitz bound 
$$|\Aut(C)| \leq 84\big(g-1\big)$$
 for the number of automorphisms of a compact Riemann surface $C$ of genus $g$.
Its automorphism group is $\PSL(2,\mathbb F_7)$ of order $168$, which acts on $Q$ by a faithful three dimensional matrix representation, 
see Klein's original exposition, or \cite{Elk99} for a modern treatment. 
In the following, we consider the subgroup $G$ of $\Aut(Q)$ generated by the projective transformations 
\[
S=
\begin{pmatrix}
0 & 1 & 0 \\
0 & 0 & 1 \\
1 & 0 & 0 
\end{pmatrix} \qquad \makebox{and} \qquad 
T=
\begin{pmatrix}
\zeta^4 & 0 & 0 \\
0 & \zeta^2 & 0 \\
0 & 0 & \zeta 
\end{pmatrix}, \qquad \zeta:=\exp\bigg(\frac{2 \pi \sqrt{-1}}{7}\bigg)
\]
of order three and seven, respectively. 
As an abstract group,  $G$ is the unique non-abelian group of order $21$: 
\[
G=\langle s,t  ~ \big\vert ~ s^3=1, ~ t^7=1,~ sts^{-1} =t^4 \rangle  \simeq \mathbb Z/7 \rtimes_{\varphi} \mathbb Z/3. 
\]
Some of its basic properties are summarized in the following lemma. 

\begin{lemma}\label{strgrp}
The group $G$ has five conjugacy classes: the trivial one, two classes of elements of order three 
\[
\Cl(s)=\lbrace s,ts, \ldots, t^6s \rbrace, \qquad \Cl(s^2)=\lbrace s^2,ts^2, \ldots, t^6s^2\rbrace
\]
and two of elements of order seven
\[
\Cl(t)=\lbrace t, t^2, t^4 \rbrace, \qquad 
\Cl(t^3)=\lbrace t^3, t^5, t^6 \rbrace.
\] 
The Sylow $3$-subgroups  are the seven cyclic groups 
$\langle t^is \rangle = \lbrace 1,t^is,t^{5i}s^2 \rbrace$ and  the unique Sylow $7$-subgroup is the cyclic group $ \langle t\rangle$.  
\end{lemma}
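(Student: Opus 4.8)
The plan is to work throughout in the normal form coming from the semidirect product structure: since $G = \langle t\rangle \rtimes_\varphi \langle s\rangle$ with $\langle t\rangle \simeq \ZZ/7$ and $\langle s\rangle \simeq \ZZ/3$, every element of $G$ is uniquely written as $t^i s^j$ with $0 \le i \le 6$ and $0 \le j \le 2$, so $|G| = 21$. The single computational input is the commutation rule derived from $sts^{-1} = t^4$: reading it as $s\,t^k = t^{4k}\,s$ (exponents mod $7$) and using $4^2 \equiv 2$, $4^3 \equiv 1 \pmod 7$, one obtains $s^2 t^k s^{-2} = t^{2k}$ and, upon moving $t$ past $s$, the relation $t^m s = s\,t^{2m}$ (since $4^{-1} \equiv 2 \pmod 7$). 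Every assertion below is a formal consequence of these formulas.

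First I would determine the classes of the order-seven elements, i.e. the nontrivial powers of $t$. Conjugation by $t$ is trivial on the abelian group $\langle t\rangle$, while conjugation by $s$ acts as $t^k \mapsto t^{4k}$. The orbit of $t$ under $k \mapsto 4k$ is $\{t, t^4, t^2\}$ and the orbit of $t^3$ is $\{t^3, t^5, t^6\}$, giving exactly the two classes $\Cl(t) = \{t, t^2, t^4\}$ and $\Cl(t^3) = \{t^3, t^5, t^6\}$.

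Next I would treat the elements $t^i s$ and $t^i s^2$. A direct calculation with the commutation rule gives $(t^i s)^2 = t^{5i} s^2$ and $(t^i s)^3 = 1$, so each $t^i s$ has order three and $\langle t^i s\rangle = \{1, t^i s, t^{5i} s^2\}$; the same holds for $t^i s^2$. Conjugating $s$ by $t$ yields $tst^{-1} = t^4 s$, and iterating shows $t^i s \mapsto t^{i+4} s$, whose orbit exhausts all seven elements $\{s, ts, \dots, t^6 s\}$; since conjugation by $s$ fixes $s$, this single orbit is $\Cl(s)$, and the analogous computation gives $\Cl(s^2) = \{s^2, ts^2, \dots, t^6 s^2\}$. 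Counting $1 + 3 + 3 + 7 + 7 = 21$ then confirms that these five classes account for all of $G$, so there are no others.

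Finally, the Sylow statements follow from Sylow's theorems together with the above. For the $7$-part, $n_7 \equiv 1 \pmod 7$ and $n_7 \mid 3$ force $n_7 = 1$, so the unique Sylow $7$-subgroup is the normal subgroup $\langle t\rangle$. For the $3$-part, $n_3 \equiv 1 \pmod 3$ and $n_3 \mid 7$ give $n_3 \in \{1,7\}$; since $G$ is non-abelian it is not $\ZZ/21$, which rules out $n_3 = 1$, hence $n_3 = 7$. These seven subgroups of order three are precisely the $\langle t^i s\rangle = \{1, t^i s, t^{5i} s^2\}$ for $i = 0, \dots, 6$, and they are pairwise distinct because the fourteen elements of order three split into seven pairs of generators. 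The only real obstacle is bookkeeping: one must keep the exponent arithmetic mod $7$ straight (notably $4^{-1} \equiv 2$, $4^2 \equiv 2$, $4^3 \equiv 1$, and the appearance of $t^{5i}$ from $i + 4i$), but there is no conceptual difficulty.
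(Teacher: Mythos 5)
Your proof is correct: the exponent arithmetic mod $7$ (the orbit $\{1,4,2\}$ versus $\{3,5,6\}$ under multiplication by $4$, the identity $(t^is)^2=t^{5i}s^2$, and the conjugation $t(t^is)t^{-1}=t^{i+4}s$) all check out, and the class count $1+3+3+7+7=21$ closes the argument. The paper states this lemma without proof, as a summary of elementary facts about the unique non-abelian group of order $21$, and your computation is precisely the routine verification the authors leave to the reader.
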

\noindent

Another highly symmetric algebraic  curve is the Fermat cubic $$\lbrace x_0^3 + x_1^3 +x_2^3 =0\rbrace \subset \mathbb P^2.$$
This curve of genus 
one is biholomorphic to the complex torus 
\[
F:=\mathbb C/\Lambda, \qquad \makebox{where} \qquad \Lambda :=\mathbb Z + \mathbb Z \epsilon \qquad \makebox{with} \qquad   \epsilon :=\exp\bigg(\frac{2\pi \sqrt{-1}}{3}\bigg). 
\]
A $G$-action on $F$ is defined by the affine transformations 
\[
f_s(z)=\epsilon z, \qquad \makebox{and} \qquad f_t(z)=z+ \frac{1+ 3\epsilon}{7}.
\]
\begin{rem}
Note that the  translations $(f_t)^d$,  correspond to the following $7$-torsion points:
\[
d \cdot \frac{1+3\epsilon}{7},  \qquad \makebox{where} \qquad 1 \leq d \leq 6.
\]
\end{rem}
\noindent 

\noindent 
Next we calculate the points on $Q$ resp. $F$ with non-trivial stabilizer.

\begin{proposition} \label{stabs_orbs}
For both curves $Q$ and $F$, the points with non-trivial stabilizer form three $G$-orbits. 
The table below gives a representative of each orbit, its stabilizer, the action of 
 the generator of the stabilizer  in local holomorphic  coordinates and the length  of the orbit: 

\medskip
{

\begin{center}
\setlength{\tabcolsep}{1pt}
\begin{tabular}{lclc|}
\begin{tabular}{|c |}
\hline
point    \\
\hline
stabilizer    \\
\hline
~ local action  ~  \\
\hline
length    \\
\hline
\end{tabular}
& ~
\begin{tabular}{|c | c | c |}
\hline
 $ ~ (1:0:0)  ~ $ & $ ~ (\epsilon^2:1:\epsilon)  ~ $  & $ ~ (\epsilon:1:\epsilon^2) ~ $   \\
\hline
 $\langle t \rangle $ & $\langle s \rangle $  & $\langle s \rangle $  \\
\hline
  $x \mapsto \zeta^4 x$ & $x \mapsto \epsilon x$  & $x \mapsto \epsilon^2 x$  \\
\hline
  $3$ & $7$  & $7$  \\
\hline
\end{tabular}
& ~
\begin{tabular}{|c | c | c |}
\hline
 $   0  $ & $ ~ \frac{1+2\epsilon}{3}  ~ $  & $ ~ \frac{2+\epsilon}{3}  ~ $   \\
\hline
$ \langle s \rangle  $ & $\langle s \rangle $  & $\langle s \rangle $  \\
\hline
 $~ x \mapsto \epsilon x ~ $ & $~ x \mapsto \epsilon x ~$  & $~ x \mapsto \epsilon x ~$  \\
\hline
 $7$ & $7$  & $7$  \\
\hline
\end{tabular}
\end{tabular}
\end{center}

} 
\end{proposition}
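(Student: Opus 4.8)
The plan is to compute, for each curve separately, the fixed loci of the elements of prime order and then to organize them into $G$-orbits, relying on the subgroup structure recorded in Lemma \ref{strgrp}. Since every nontrivial element of $G$ has order $3$ or $7$, a point has nontrivial stabilizer exactly when it is fixed by a generator of one of the seven Sylow $3$-subgroups $\langle t^is\rangle$ or of the unique Sylow $7$-subgroup $\langle t\rangle$. As the Sylow $7$-subgroup is normal, $\Fix(t^k)=\Fix(t)$ for every $k\not\equiv 0$; and as the Sylow $3$-subgroups are conjugate, $\Fix(gsg^{-1})=g\cdot\Fix(s)$. Hence it suffices to determine $\Fix(s)$ and $\Fix(t)$ on each curve, every other fixed point being a $G$-translate of one of these.

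For the Klein quartic, $T=\diag(\zeta^4,\zeta^2,\zeta)$ fixes precisely the three coordinate points of $\PP^2$, each of which lies on $Q$; since $S$ permutes the coordinates cyclically, these form a single orbit of length $3$ with stabilizer $\langle t\rangle$, and linearizing $T$ in the chart $x_0=1$ (where $x_2$ is a uniformizer of $Q$) gives the local action $x\mapsto\zeta^4 x$. The fixed points of $S$ on $\PP^2$ are the eigenpoints $(1:\lambda:\lambda^2)$ with $\lambda^3=1$: the point $(1:1:1)$ does not satisfy the quartic equation, whereas the other two, which up to scaling are $(\epsilon^2:1:\epsilon)$ and $(\epsilon:1:\epsilon^2)$, do. For the Fermat cubic the nonzero powers of $f_t$ are translations and hence fixed-point free, so no point is stabilized by an element of order $7$; solving the congruence $(\epsilon-1)z\in\Lambda$ yields exactly the three fixed points $0,\ \frac{1+2\epsilon}{3},\ \frac{2+\epsilon}{3}$ of $f_s$, and writing $z=z_0+x$ near any fixed point $z_0$ shows the local action is $x\mapsto\epsilon x$ in each case.

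It remains to pin down the stabilizers and to separate the orbits. For any order-$3$ fixed point $P$ the stabilizer contains $\langle s\rangle$, so $|\Stab(P)|$ is a multiple of $3$ dividing $21$, hence $3$ or $21$; it cannot be $21$, as that would force $t$ to fix $P$, against $P$ not being a coordinate point on $Q$ and against $f_t$ being fixed-point free on $F$. Thus each such $P$ has stabilizer exactly $\langle s\rangle$, and its orbit has length $21/3=7$. The crucial point is that the two such points on $Q$ (resp. the three on $F$) lie in \emph{distinct} orbits: because $G$ has seven Sylow $3$-subgroups, $N_G(\langle s\rangle)$ has index $7$ in $G$, so $|N_G(\langle s\rangle)|=3$ and $\langle s\rangle$ is self-normalizing. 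Hence if some $g$ carried one such point $P$ to another $P'$, then $g\langle s\rangle g^{-1}=\Stab(P')=\langle s\rangle$ would give $g\in N_G(\langle s\rangle)=\langle s\rangle=\Stab(P)$ and therefore $P'=gP=P$; distinct points thus lie in distinct orbits.

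Completeness follows since any point with nontrivial stabilizer is fixed by some element of prime order, hence---after a $G$-translation carrying its stabilizing Sylow subgroup to $\langle s\rangle$ or $\langle t\rangle$---by $s$ or by $t$, and we have computed $\Fix(s)$ and $\Fix(t)$ in full, discarding only the one $S$-fixed point $(1:1:1)$ that misses $Q$. I expect the genuine obstacle to be the separation into distinct orbits: a priori the order-$3$ fixed points could coalesce under $G$, and the clean way around this is the self-normalizing property of the Sylow $3$-subgroup supplied by Lemma \ref{strgrp}. The remaining ingredients---intersecting the eigenpoints of $S$ with $Q$, solving $(\epsilon-1)z\in\Lambda$, and the two linearizations---are routine, and the resulting count of three orbits is consistent with the triangle-curve signatures $(3,7,3)$ for $Q$ and $(3,3,3)$ for $F$ obtained from Riemann--Hurwitz.
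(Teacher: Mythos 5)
Your argument follows the same route as the paper's: determine $\Fix(t)$ and $\Fix(s)$ on each curve, use the Sylow structure of $G$ to see that every nontrivial stabilizer is $\langle t\rangle$ or a conjugate of $\langle s\rangle$, and then read off orbits, stabilizers and lengths. Your orbit-separation step is in fact a little cleaner than the paper's: you invoke the self-normalizing property of the Sylow $3$-subgroups to conclude that two distinct points with stabilizer exactly $\langle s\rangle$ can never be $G$-equivalent, whereas the paper simply lists the two orbits $\lbrace (\zeta^{4d}\epsilon^2:\zeta^{2d}:\zeta^d\epsilon)\rbrace$ and $\lbrace(\zeta^{4d}\epsilon:\zeta^{2d}:\zeta^d\epsilon^2)\rbrace$ and leaves their disjointness to inspection. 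The identification of the fixed points themselves (the $S$-eigenpoints meeting $Q$, the solutions of $(\epsilon-1)z\in\Lambda$ on $F$) and the resulting counts all agree with the paper.

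There is, however, one entry of the table that your proof never establishes: the local action of $s$ at the two order-three fixed points of the Klein quartic, namely $x\mapsto\epsilon x$ at $(\epsilon^2:1:\epsilon)$ and $x\mapsto\epsilon^2 x$ at $(\epsilon:1:\epsilon^2)$. You carry out the linearization of $T$ at $(1:0:0)$ and of $f_s$ on $F$, and your closing reference to ``the two linearizations'' suggests these are the only ones you have in mind; but the action of $S$ on the tangent line of $Q$ at these two points is a third, separate, and less automatic computation. A priori the local eigenvalue at each point is one of $\epsilon,\epsilon^2$ (the ratios of the eigenvalues of $S$ on $\mathbb C^3$), and one must decide which root of unity occurs at which point; the paper does this by restricting $S$ to the affine chart $\lbrace x_1=1\rbrace$, computing the Jacobian matrices $J_S(\epsilon^2,\epsilon)$ and $J_S(\epsilon,\epsilon^2)$, and evaluating them on the common tangent direction $x+y=0$. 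This is not a cosmetic detail: the fact that the two points carry \emph{different} local eigenvalues is precisely what produces the two distinct singularity types $\frac{1}{3}(1,\ldots,1)$ and $\frac{1}{3}(1,\ldots,1,2)$ in Proposition \ref{singsandkod}, so your proof is incomplete until this computation is supplied.
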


\begin{proof}
First we consider the Klein quartic. 
The fixed points of $T$ are $(1:0:0)$, $(0:1:0)$ and  
$(0:0:1)$. These points are permuted by $S$ and stabilized by the powers of $T$, thus they form a single orbit.  
The orbit-stabilizer-correspondence  implies that the stabilizer of  $p_0:=(1:0:0)$ has order seven and is therefore  equal to $ \langle t \rangle$. 
To understand the local action around $p_0$, we consider the open affine
$\lbrace x_0=1\rbrace $, where the curve is given by $x+x^3y+y^3=0$ and $T$ acts via $(x,y) \mapsto (\zeta^5x,\zeta^4 y)$. By the implicit function theorem $y$ is a local parameter and 
we see that  
multiplication with $\zeta^4$ is the local action. 
The matrix $S$ has three 1-dimensional eigenspaces corresponding to the points $q_0:=(1:1:1)$, $q_1:=(\epsilon^2:1:\epsilon)$ and  
$q_2:=(\epsilon:1:\epsilon^2)$.  Only $q_1$ and $q_2$ belong to the quartic. The orbits of $q_1$ and $q_2$ are  the seven translates by the powers of $T$: 
\[
\lbrace (\zeta^{4d}\epsilon^2: \zeta^{2d}: \zeta^{d}\epsilon) ~ \big\vert ~ 0 \leq d \leq 6 \rbrace  \qquad \makebox{and} \qquad 
\lbrace (\zeta^{4d}\epsilon: \zeta^{2d}: \zeta^{d}\epsilon^2) ~ \big\vert ~ 0 \leq d \leq 6 \rbrace. 
\]
Hence, the stabilizers of $q_1$ and $q_2$ have order three, so they are equal to $ \langle s \rangle$. 
Note that $q_1$ and $q_2$ are contained  in the open affine 
$\lbrace x_1=1\rbrace $, where the curve is given by 
$x^3+y+y^3x=0$. The automorphism $S$ restricts to the open set 
  $\lbrace x \neq 0 \rbrace \cap \lbrace y \neq 0 \rbrace$,
 where we can  write it  as $(x,y) \mapsto (1/y,x/y)$. 
Recall that the local action of  $S$ around these points is the same as the action on the tangent spaces  
$T_{(\epsilon^2,\epsilon)}Q$ and $T_{(\epsilon,\epsilon^2)}Q$,  induced by multiplication with the Jacobian matrices: 
  \[
J_S(\epsilon^2,\epsilon)=
 \begin{pmatrix}
0 & -\epsilon  \\
\epsilon^2 & -1
\end{pmatrix} \qquad \makebox{and} \qquad 
  J_S(\epsilon,\epsilon^2)=
 \begin{pmatrix}
0 & -\epsilon^2  \\
\epsilon & -1
\end{pmatrix}.
\]  
Since both tangent spaces are equal to the  line $x+y=0$, we conclude that 
$J_S(\epsilon^2,\epsilon)$,  acts via multiplication by $\epsilon$ and $J_S(\epsilon,\epsilon^2)$ 
by multiplication with $\epsilon^2$. 
Let $p \in Q$ be a point with non-trivial stabilizer. As the stabilizer of $p$ is cyclic, it must be $\langle t \rangle$,  or one of the seven 
$3$-Sylow subgroups. In the first case $p$ is equal to $(1:0:0)$, $(0:1:0)$ or $(0:0:1)$ and in the second case the point $p$  is in the orbit of $q_1$ or $q_2$, because  the $3$-Sylow subgroups are conjugated. 

\noindent
Next we analyze the Fermat elliptic curve. Clearly the translations $f_{t^d}$ for $1 \leq d \leq 6$ cannot have fixed points and
the condition for $z$ to be a fixed point of $f_{s}$ is  $z(\epsilon-1) \in \Lambda$. 
We write $z= a + b\epsilon$ with $a,b \in \mathbb R$ and compute 
\[
z(\epsilon-1)=(a+b\epsilon)(\epsilon-1)= - (a+b) + (a-2b)\epsilon . 
\]
Hence $z(\epsilon-1)$ is in the lattice  if and only if $a+b$ and $a-2b$ are integers. 
Necessarily this implies that  $3a$ and $3b$ are also integers, so $z$ is one of the nine 3-torsion points. Out of these only $0$,  $\frac{1+ 2\epsilon}{3}$ and 
$\frac{2+\epsilon}{3}$
fulfill the conditions $a+b \in \mathbb Z$ and $a-2b \in \mathbb Z$. 
The local action of $s$ around any of these three points is multiplication  by $\epsilon$.  
Their orbits are the translates by $f_{t^d}$, where  $0 \leq d \leq 6$. 
We conclude that stabilizers of  $0$,  $\frac{1+ 2\epsilon}{3}$ and 
$\frac{2+\epsilon}{3}$ have order three, so they are equal to $\langle s \rangle$. 
As above, it follows that, apart from the 21 points in the orbits, there are no other  points with non-trivial stabilizer.  
\end{proof}

\noindent 
Using Hurwitz's formula we immediately get that $\pi_Q\colon Q \ra Q/G$ and $\pi_F\colon F \ra F/G$ are {\em triangle curves}. In fact, we have:

\begin{corollary}
The quotients $Q/G$ and $F/G$ are isomorphic to $\mathbb P^1$ and the $G$-covers
$\pi_Q\colon Q \to \mathbb P^1$ and $\pi_F\colon F \to \mathbb P^1$ are branched in three points  $0,1,\infty \in \mathbb P^1$. 
\end{corollary}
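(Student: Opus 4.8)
The plan is to combine the previous proposition with the Riemann--Hurwitz formula applied to each of the two $G$-covers separately. Since $G$ acts on each curve with quotient a smooth projective curve, and I must show that quotient is $\PP^1$ and that exactly three points are branched, the genus of the quotient will be read off from Hurwitz, and the branch structure from the stabilizer data already computed in Proposition~\ref{stabs_orbs}.

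First I would treat the Fermat cubic $F$. The group $G$ has order $21$, and by Proposition~\ref{stabs_orbs} the points of $F$ with nontrivial stabilizer consist of three orbits, each of length $7$ with stabilizer $\langle s\rangle$ of order $3$. Writing $g=1$ for the genus of $F$ and $\bar g$ for the genus of $F/G$, the Riemann--Hurwitz formula
\[
2g-2 = |G|\bigl(2\bar g - 2\bigr) + \sum_{p} \bigl(e_p - 1\bigr)
\]
applies, where the ramification sum runs over the ramification points upstairs. Each of the three orbits contributes $7$ points, each with ramification index $e_p = 3$ (the order of the stabilizer), so the total ramification is $3\cdot 7\cdot(3-1)=42$. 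Plugging $g=1$ gives $0 = 21(2\bar g-2)+42$, hence $2\bar g-2 = -2$, i.e. $\bar g = 0$, so $F/G\simeq \PP^1$. Moreover the three $G$-orbits map to three distinct points of $\PP^1$, and these are exactly the branch points, so $\pi_F$ is branched in precisely three points.

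Next I would do the identical computation for the Klein quartic $Q$, which has genus $g=3$. Here Proposition~\ref{stabs_orbs} gives three orbits: one of length $3$ with stabilizer $\langle t\rangle$ of order $7$, and two of length $7$ each with stabilizer of order $3$. The ramification sum is $3\cdot(7-1) + 2\cdot 7\cdot(3-1) = 18 + 28 = 46$. Riemann--Hurwitz then reads $2\cdot 3 - 2 = 21(2\bar g - 2) + 46$, i.e. $4 - 46 = -42 = 21(2\bar g - 2)$, giving $2\bar g-2=-2$ and $\bar g = 0$, so $Q/G\simeq\PP^1$; again the three orbits give exactly three branch points. Finally, to identify these branch points with $0,1,\infty$ one simply chooses the isomorphism $Q/G\xrightarrow{\sim}\PP^1$ (resp. $F/G\xrightarrow{\sim}\PP^1$) to send the three branch values to $0,1,\infty$, which is possible since $\Aut(\PP^1)$ acts sharply $3$-transitively. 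This exhibits both maps as triangle curves.

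I do not anticipate a genuine obstacle: the whole argument is a bookkeeping application of Riemann--Hurwitz, and all the necessary local data (orbit lengths and stabilizer orders) have already been assembled in Proposition~\ref{stabs_orbs}. The only point requiring a word of care is that the ramification index at a point equals the order of its stabilizer, which holds because $G$ is acting on a smooth curve with cyclic stabilizers acting by a primitive root of unity in a local coordinate, as recorded in the ``local action'' column of the proposition; the arithmetic consistency of both Hurwitz computations (each yielding $\bar g=0$) is the expected sanity check.
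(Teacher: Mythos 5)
Your proof is correct and follows exactly the route the paper intends: the paper simply states that the corollary follows ``immediately'' from Proposition~\ref{stabs_orbs} via Hurwitz's formula, and your two Riemann--Hurwitz computations (yielding $\bar g=0$ in both cases) together with the normalization of the three branch values to $0,1,\infty$ supply precisely the omitted bookkeeping.
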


{\bf The singular quotients $X_n$} \ 

For each $n \geq 3$ we consider the diagonal action of $G$ on the product 
$F^{n-1} \times Q$ given by:
\[
(t^as^b)(z_1,\ldots,z_{n-1},y):=\big(f_{t^as^b}(z_1), \ldots, f_{t^as^b}(z_{n-1}),T^aS^by \big), \qquad  f_{t^as^b}:=f_{t}^a \circ f_{s}^b,
\]

and the quotient 
\[
X_n:=\big(F^{n-1} \times Q\big)/G.
\]
\begin{rem}
Observe that  $X_n$ is a normal, $\mathbb Q$-factorial projective variety. 
\end{rem}

Using Proposition \ref{stabs_orbs}, we can compute the singular locus of $X_n$ and its Kodaira dimension.
\begin{proposition}\label{singsandkod}
The variety $X_n$ has $3^{n-1}$ cyclic quotient singularities of type 
$\frac{1}{3}(1, \ldots,1)$ and $3^{n-1}$ singularities of type $\frac{1}{3}(1, \ldots,1,2)$. In particular $X_n$ has canonical singularities and 
Kodaira dimension $\kappa(X_n)= 1$. 
\end{proposition}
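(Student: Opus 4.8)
The plan is to locate the singularities of $X_n$ by computing the stabilisers of the diagonal $G$-action on $F^{n-1}\times Q$, to identify the resulting cyclic quotient singularities from their local linearisations, and finally to read off $\kappa(X_n)$ from the $G$-invariant pluricanonical forms. \textbf{First}, for $p=(z_1,\dots,z_{n-1},y)$ the stabiliser is $\bigcap_i\Stab_F(z_i)\cap\Stab_Q(y)$. Since the translations $f_{t^d}$ ($1\le d\le 6$) act freely on $F$, no element of order $7$ fixes any $z_i$; hence the stabiliser of $p$ contains no element of order $7$ and, by Lemma \ref{strgrp}, is either trivial or one of the seven $3$-Sylow subgroups $\langle t^is\rangle$. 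In particular the action is free in codimension one, and the singular points of $X_n$ are exactly the images of the points with a $3$-Sylow stabiliser.

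\textbf{Next} I fix the Sylow subgroup $\langle s\rangle$ and count. By Proposition \ref{stabs_orbs} the element $s$ fixes the three points $0,\frac{1+2\epsilon}{3},\frac{2+\epsilon}{3}$ of $F$, each with local action $\epsilon$, and the two points $q_1=(\epsilon^2:1:\epsilon)$ and $q_2=(\epsilon:1:\epsilon^2)$ of $Q$, with local actions $\epsilon$ and $\epsilon^2$. Thus $s$ fixes precisely $2\cdot 3^{n-1}$ points of $F^{n-1}\times Q$, each having stabiliser exactly $\langle s\rangle$. As there are seven conjugate Sylow subgroups we have $N_G(\langle s\rangle)=\langle s\rangle$, so the seven points of any singular $G$-orbit carry the seven distinct Sylow subgroups as stabilisers; hence each orbit meets $\Fix(s)$ in a single point and there are exactly $2\cdot 3^{n-1}$ singular points. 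The action of $s$ on $T_p=\bigoplus_i T_{z_i}F\oplus T_yQ$ is then diagonal with the $n-1$ elliptic factors contributing $\epsilon$: if $y=q_1$ the last eigenvalue is $\epsilon$, giving type $\frac13(1,\dots,1)$, and if $y=q_2$ it is $\epsilon^2$, giving type $\frac13(1,\dots,1,2)$. No eigenvalue equals $1$, so these are genuine isolated cyclic quotient singularities, and since $q_1,q_2$ lie in distinct $G$-orbits on $Q$ the two families never merge, yielding $3^{n-1}$ singularities of each type.

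\textbf{For canonicity} I invoke the Reid--Tai criterion: all the weights above are units modulo $3$, so $\overline{ka_i}\ge 1$ for every $k\not\equiv 0$ and the age satisfies $\frac13\sum_i\overline{ka_i}\ge\frac n3\ge 1$ because $n\ge 3$; thus both types are canonical. \textbf{For the Kodaira dimension}, since the singularities are canonical and the action is free in codimension one, $P_m(X_n)=h^0\!\big(F^{n-1}\times Q,\omega^{\otimes m}\big)^G$ with $\omega=\omega_{F^{n-1}\times Q}$. As $\omega_F\cong\mathcal O_F$, K\"unneth identifies this with $R_m:=H^0(Q,\omega_Q^{\otimes m})$, on which $G$ acts through its linear action on $x_0,x_1,x_2$, while it acts on the trivial $F$-factor through a one-dimensional character $\chi^m$; hence $P_m(X_n)=\dim R_m^{(\chi^{-m})}$, the dimension of a one-dimensional isotypic component. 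Now $\dim R_m=4m-2$ for $m\ge 2$, whereas the holomorphic Lefschetz fixed point formula (together with $H^1(Q,\omega_Q^{\otimes m})=0$ for $m\ge 2$) shows that $\tr(g\mid R_m)$ is bounded independently of $m$ for $g\ne 1$. Therefore $P_m(X_n)=\frac{1}{21}(4m-2)+O(1)$ grows linearly, and combined with $P_m(X_n)\le\dim R_m=O(m)$ this gives $\kappa(X_n)=1$.

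\textbf{The main obstacle} is the lower bound $\kappa(X_n)\ge 1$: the classification of the singularities and their canonicity is essentially bookkeeping built on Proposition \ref{stabs_orbs}, but to see that the plurigenera are unbounded one must control the $G$-representation on the canonical ring $\bigoplus_m H^0(Q,\omega_Q^{\otimes m})$ of the Klein quartic and check that the relevant character-isotypic part does not stay bounded --- which is exactly where the holomorphic Lefschetz computation is needed.
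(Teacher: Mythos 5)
Your proposal is correct, and for the singularity analysis it follows essentially the same path as the paper: locate the points with nontrivial stabiliser via Proposition \ref{stabs_orbs}, reduce to the Sylow subgroup $\langle s\rangle$ by conjugacy, read off the local weights $(1,\dots,1)$ resp.\ $(1,\dots,1,2)$ from the choice of $y\in\{q_1,q_2\}$, and count $3^{n-1}$ orbits of each type (your normaliser argument $N_G(\langle s\rangle)=\langle s\rangle$ makes the orbit count slightly more explicit than the paper's appeal to uniqueness of the representative; you also verify Reid--Tai by hand where the paper only cites it). The genuine divergence is in the Kodaira dimension. The paper disposes of it in one line: since $\pi\colon F^{n-1}\times Q\to X_n$ is quasi-\'etale and the singularities are canonical, $\kappa(X_n)=\kappa(F^{n-1}\times Q)=\kappa(Q)=1$ by \cite[Section 3]{catQED}. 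You instead compute the plurigenera directly, identifying $P_m(X_n)$ with the dimension of a one-dimensional character-isotypic piece of $H^0(Q,\omega_Q^{\otimes m})$ and using the holomorphic Lefschetz fixed point formula to show this grows like $\frac{1}{21}(4m-2)+O(1)$. This is a legitimate and self-contained alternative: it buys you independence from the QED machinery at the cost of a character computation, and it even yields the asymptotics of the plurigenera. Two small points to tidy: the character twisting $H^0(Q,\omega_Q^{\otimes m})$ should be $\chi_{\epsilon^2}^{m(n-1)}$, not $\chi^m$, since each of the $n-1$ Fermat factors contributes $H^0(F,\omega_F^{\otimes m})\cong\chi_{\epsilon^2}^{m}$ (harmless, as it is still one-dimensional); and you should say explicitly that $P_m(X_n)=h^0(X_n,\omega_{X_n}^{[m]})$ computes the plurigenera of a resolution precisely because the singularities are canonical --- which you do flag, but it is the hinge on which the whole reduction turns.
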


\begin{proof}
Let $[(z_1,\ldots,z_{n-1},y)] \in X_n$ be a singular point, then the stabilizer of the representative $(z_1,\ldots,z_{n-1},y)$ is one of the 
$3$-Sylow groups. Since they are all conjugated, we can assume that the stabilizer is $\langle s \rangle$. The representative is then 
unique, each $z_i$ must be one of the points 
$0$, $\frac{1+2\epsilon}{3}$ or $\frac{2+\epsilon}{3}$ and 
$y$ is either $(\epsilon^2:1:\epsilon)$ or $(\epsilon:1:\epsilon^2)$, by to Proposition \ref{stabs_orbs}. 
Therefore,  the local action of  $s$ is either  multiplication with the diagonal matrix  $\diag(\epsilon, \ldots, \epsilon)$ or 
$\diag(\epsilon, \ldots, \epsilon, \epsilon^2)$, depending on $y$. Hence, there are $3^{n-1}$ points of type  $\frac{1}{3}(1, \ldots,1)$ and $3^{n-1}$ of type   $\frac{1}{3}(1, \ldots,1,2)$. These singularities are canonical by the criterion of Reid-Shepherd-Barron-Tai see \cite[ p. 376 Theorem]{R87}. 
Since the quotient map $\pi \colon F^{n-1} \times Q \to  X_n$ is quasi-etale, we have 
$\kappa(X_n)= \kappa(F^{n-1} \times Q)=\kappa(Q)=1$, according to  \cite[Section 3]{catQED}. 
\end{proof}

\section{Rigidity of the $G$-action}

In this section we show that the $G$ action on $Y_n:=F^{n-1} \times Q$ is infinitesimally rigid i.e.  
$H^1(Y_n,\Theta_{Y_n})^G=0$. 
Our strategy is to determine the character $\chi_{\psi}$ of the natural $G$-representation 
\[
\psi \colon G \to \GL(H^1\big(Y_n,\Theta_{Y_n})^{\vee}\big)
\]
and show that $\chi_{\psi}$  does not contain the trivial character $\chi_{triv}$.

\begin{rem}
The representation theory of $G$ is easy to understand: according to Lemma \ref{strgrp}  there are $5$ conjugacy classes and consequently also $5$ irreducible representations. 
Apart from the trivial representations we obtain  two one-dimensional representations from the quotient  $G/\langle t \rangle \simeq \mathbb Z/3$ by inflation. 
As usual they are identified  with their characters: 
\[
\chi_{\epsilon}(t^as^b)= \epsilon^{b} \qquad \makebox{and}  \qquad \chi_{\epsilon^2}(t^as^b)= \epsilon^{2b}.
\]
Since the degrees $d_i$ of the remaining two representations satisfy $d_1^2+d_2^2=18$, we conclude that $d_1=d_2=3$. The matrices $S$ and $T$  from the previous section define a faithful $3$ dimensional representation $\eta$, which must be irreducible since $G$ is non-abelian. The second three-dimensional representation is the complex conjugate $\overline{\eta}$, explicitly:
\[
s \mapsto S=
\begin{pmatrix}
0 & 1 & 0 \\
0 & 0 & 1 \\
1 & 0 & 0 
\end{pmatrix} \qquad \makebox{and} \qquad 
t \mapsto \overline{T}=
\begin{pmatrix}
\zeta^3 & 0 & 0 \\
0 & \zeta^5 & 0 \\
0 & 0 & \zeta^6 
\end{pmatrix}.
\]
\end{rem}
\noindent 
These representations occur naturally in our geometric picture: the pullback induces representations on the global sections of the tensor powers 
of  the sheaf of holomorphic $1$-forms on the Klein quartic
\[
\psi_{\omega_Q^{\otimes k}} \colon G \mapsto \GL\big(H^0(Q,\omega_Q^{\otimes k})\big), \qquad t^as^b \mapsto [\omega \mapsto (S^{-b}T^{-a})^{\ast} \omega]
\]
and similarly on the Fermat cubic.  Explicitly, we have: 

\begin{lemma}\label{repsforms}
The characters of the representations on the $1$-forms and the quadratic differentials are 
\[
\chi_{\omega_Q}=\chi_{\overline{\eta}}, \qquad \chi_{\omega_Q^{\otimes 2}}=\chi_{\eta} + \chi_{\overline{\eta}}, \qquad 
\chi_{\omega_F}=\chi_{\epsilon^2} \qquad \makebox{and} \qquad \chi_{\omega_F^{\otimes 2}}=\chi_{\epsilon}.
\]
\end{lemma}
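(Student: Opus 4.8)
The plan is to handle the two curves separately, since in each case the relevant cohomology is spanned by explicit global sections carrying a transparent $G$-action. Throughout I follow the convention already fixed above: $t^as^b$ acts on a $k$-differential $\omega$ by $(S^{-b}T^{-a})^{\ast}\omega$ on $Q$ (and by $(f_s^{-b}f_t^{-a})^{\ast}\omega$ on $F$), i.e.\ by pullback along the \emph{inverse} automorphism, which is exactly what makes $\psi$ a homomorphism rather than an anti-homomorphism. The Fermat cubic is then immediate: since $F$ is elliptic, $\omega_F\cong\hol_F$ and $H^0(F,\omega_F^{\otimes k})$ is one--dimensional, spanned by $(dz)^{\otimes k}$. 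The translations $f_t^a$ fix $dz$, while $f_s(z)=\epsilon z$ gives $f_s^{\ast}dz=\epsilon\,dz$, so $t^as^b$ acts on $dz$ by the multiplier $\epsilon^{-b}=\epsilon^{2b}$, whence $\chi_{\omega_F}=\chi_{\epsilon^2}$; squaring the multiplier gives $\epsilon^{-2b}=\epsilon^{b}$, so $\chi_{\omega_F^{\otimes 2}}=\chi_{\epsilon}$. This half uses nothing beyond one--dimensionality and the explicit action on $dz$.

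For the Klein quartic the key input is adjunction. As $Q$ is a smooth plane quartic, $\omega_Q\cong\hol_Q(1)$, so restriction of linear forms identifies $H^0(Q,\omega_Q)$ with the space $W=\langle x_0,x_1,x_2\rangle$ of linear forms on $\PP^2$. Computing the pullback $x_i\mapsto x_i\circ\varphi$ along the projective automorphism with matrix $M$ shows that this action on $W$ has trace $\tr(M)$. Applying this to $M=(T^aS^b)^{-1}=\eta(t^as^b)^{-1}$ yields $\chi_{\omega_Q}(g)=\tr(\eta(g)^{-1})=\overline{\chi_{\eta}(g)}=\chi_{\overline{\eta}}(g)$, which is the first Klein identity.

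For the quadratic differentials I would again invoke adjunction, $\omega_Q^{\otimes 2}\cong\hol_Q(2)$, together with the fact that the multiplication map $\Sym^2 H^0(Q,\omega_Q)\to H^0(Q,\omega_Q^{\otimes 2})$ is an isomorphism: surjectivity is Max Noether's theorem, since the non-hyperelliptic curve $Q$ has canonical ring generated in degree one, and alternatively one notes directly that no conic contains the irreducible quartic $Q$, so $\Sym^2 W\to H^0(Q,\hol_Q(2))$ is an injection between two six--dimensional spaces. Hence $H^0(Q,\omega_Q^{\otimes 2})\cong\Sym^2\overline{\eta}$ as $G$-modules, and it remains to decompose this. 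Using $\chi_{\Sym^2\rho}(g)=\tfrac12\big(\chi_{\rho}(g)^2+\chi_{\rho}(g^2)\big)$ together with the relation $\alpha^2+\alpha+2=0$ satisfied by $\alpha=\chi_{\eta}(t)=\zeta+\zeta^2+\zeta^4$ and its conjugate $\beta=\chi_{\eta}(t^3)$, a short evaluation on the five conjugacy classes of Lemma \ref{strgrp} produces the character values $(6,0,0,-1,-1)$, which is precisely $\chi_{\eta}+\chi_{\overline{\eta}}$. This gives $\chi_{\omega_Q^{\otimes 2}}=\chi_{\eta}+\chi_{\overline{\eta}}$.

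The computations for $F$ and the decomposition $\Sym^2\overline{\eta}\cong\eta\oplus\overline{\eta}$ are routine once set up; the genuinely delicate point is the bookkeeping of conventions on $Q$. One must correctly track that pullback along the \emph{inverse} automorphism turns the geometric standard representation $\eta$ (by which $G$ acts on the points of $\PP^2$) into its complex conjugate $\overline{\eta}$ on linear forms—confusing $\eta$, its dual, and its conjugate would yield the wrong Klein identities. Equally, one must \emph{prove} $H^0(Q,\omega_Q^{\otimes 2})=\Sym^2 H^0(Q,\omega_Q)$ rather than assume it: the Noether/B\'ezout argument is exactly what rules out ``extra'' quadratic differentials beyond products of abelian differentials, and hence pins down the decomposition to be $\eta\oplus\overline{\eta}$.
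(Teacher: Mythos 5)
Your proposal is correct and follows essentially the same route as the paper: it identifies $H^0(Q,\omega_Q)$ with the linear forms via the canonical embedding to get $\chi_{\overline{\eta}}$, uses Max Noether's theorem plus the dimension count to identify $H^0(Q,\omega_Q^{\otimes 2})$ with $\Sym^2 H^0(Q,\omega_Q)$ and then applies the symmetric-square character formula, and computes the Fermat case directly on the generator $dz$. The only additions are optional cross-checks (the B\'ezout alternative to Noether and the explicit evaluation of $\chi_{\Sym^2\overline{\eta}}$ on the conjugacy classes), both of which are correct.
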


\begin{proof}
The Klein quartic $Q \subset \mathbb P^2$ is canonically embedded i.e. we can regard $x_0$, $x_1$ and $x_2$ as a basis of $H^0(Q,\omega_Q)$. 
With respect to this basis, 
the representation  $\psi_{\omega_Q}$ is then given by $\overline{\eta}$. By Max Noether's classical result \cite[Noether's Theorem page 253]{GriffH},  there is  a surjection 
$\Sym^2 H^0(Q,\omega_Q) \to H^0(Q,\omega_Q^{\otimes 2})$, which is in our case an isomorphism since both spaces are $6$-dimensional. Hence 
\[
\chi_{\omega_Q^{\otimes 2}}(g)= \frac{\chi_{\omega_Q}(g)^2+ \chi_{\omega_Q}(g^2)}{2}= \chi_{\eta}(g) + \chi_{\overline{\eta}}(g)
 \qquad \makebox{for all} \qquad g \in G. 
\]
For the Fermat cubic the situation is the following: the space $H^0(F,\omega_F)$ is spanned by the $1$-form $dz$. The computations 
$ \big(f_{s^{-1}}\big)^{\ast}dz = \epsilon^{-1} dz = \epsilon^{2} dz$  and $\big(f_{t^{-1}}\big)^{\ast}dz = dz$ 
imply $\chi_{\omega_F}=\chi_{\epsilon^2}$, but also $ \chi_{\omega_F^{\otimes 2}}=\chi_{\epsilon^2}^2=\chi_{\epsilon}$, 
since   $dz^{\otimes 2}$ is a basis of $H^0(F,\omega_F^{\otimes 2})$.
\end{proof}
\noindent 
The lemma above provides enough information to understand $\psi$: 

\begin{proposition}\label{rigidG}
The character of  $\psi$ is given by 
\[
\chi_{\psi}= (n-1)^2 \chi_{\epsilon} + n\chi_{\overline{\eta}} + \chi_{\eta},
\]
in particular $H^1(Y_n,\Theta_{Y_n})^G=0$.
\end{proposition}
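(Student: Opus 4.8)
The plan is to reduce everything to the two curves $F$ and $Q$ by the Künneth formula, using two structural facts: the $G$-action on $Y_n=F^{n-1}\times Q$ is \emph{diagonal}, so it preserves each factor (it does not permute the copies of $F$); and the tangent sheaf of a product splits $G$-equivariantly as
\[
\Theta_{Y_n}=\bigoplus_{i=1}^{n-1}\mathrm{pr}_i^{\ast}\Theta_F \;\oplus\; \mathrm{pr}_Q^{\ast}\Theta_Q .
\]
Applying the $G$-equivariant Künneth formula to each summand expresses $H^1(Y_n,\Theta_{Y_n})$ as a direct sum of tensor products of the cohomology groups $H^0,H^1$ of $\hol$, $\Theta_F$, $\Theta_Q$ on the individual curves, the total cohomological degree being $1$ (curves have cohomological dimension $1$). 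Because the action is diagonal these are genuine tensor products of $G$-representations, so the corresponding characters simply multiply.

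First I would record the five curve characters. From Lemma \ref{repsforms} one has $\chi_{\omega_F}=\chi_{\epsilon^2}$, $\chi_{\omega_F^{\otimes 2}}=\chi_{\epsilon}$, $\chi_{\omega_Q}=\chi_{\overline{\eta}}$, $\chi_{\omega_Q^{\otimes 2}}=\chi_{\eta}+\chi_{\overline{\eta}}$. Using Serre duality on curves, $H^1(C,\sF)\cong H^0(C,\sF^{\vee}\otimes\omega_C)^{\vee}$ (which conjugates the character), together with the $G$-equivariant contraction pairing $\Theta_C\otimes\omega_C\to\hol_C$, I obtain
\[
H^0(F,\hol)=\chi_{triv},\quad H^1(F,\hol)=\chi_{\epsilon},\quad H^0(F,\Theta_F)=\chi_{\epsilon},\quad H^1(F,\Theta_F)=\chi_{\epsilon^2},
\]
and on the Klein quartic
\[
H^0(Q,\hol)=\chi_{triv},\quad H^1(Q,\hol)=\chi_{\eta},\quad H^0(Q,\Theta_Q)=0,\quad H^1(Q,\Theta_Q)=\chi_{\eta}+\chi_{\overline{\eta}},
\]
the vanishing $H^0(Q,\Theta_Q)=H^0(Q,\omega_Q^{-1})=0$ being forced by negative degree.

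Two observations then collapse the bookkeeping. The vanishing of $H^0(Q,\Theta_Q)$ kills every Künneth term arising from $\mathrm{pr}_Q^{\ast}\Theta_Q$ except the one carrying full degree on the $Q$-factor, leaving $\chi_{\eta}+\chi_{\overline{\eta}}$. For the $F$-summands one uses the twist identities $\chi_{\epsilon}\cdot\chi_{\epsilon}=\chi_{\epsilon^2}$ and $\chi_{\epsilon}\cdot\chi_{\eta}=\chi_{\eta}$, both checked directly on the five conjugacy classes of Lemma \ref{strgrp} (note $\chi_{\eta}$ vanishes on the order-three classes and $\chi_{\epsilon}\equiv1$ on the order-seven classes). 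Summing the contribution of each $\mathrm{pr}_i^{\ast}\Theta_F$ and then over $i=1,\dots,n-1$ yields $(n-1)^2\chi_{\epsilon^2}+(n-1)\chi_{\eta}$, and adding the $Q$-summand gives
\[
\chi_{H^1(Y_n,\Theta_{Y_n})}=(n-1)^2\chi_{\epsilon^2}+n\,\chi_{\eta}+\chi_{\overline{\eta}} .
\]
Since $\psi$ acts on the \emph{dual} space, the character of $\psi$ is the conjugate of this, i.e. $\chi_{\psi}=(n-1)^2\chi_{\epsilon}+n\,\chi_{\overline{\eta}}+\chi_{\eta}$, as claimed. The invariants then vanish because $\chi_{\psi}$ is a nonnegative combination of the nontrivial irreducibles $\chi_{\epsilon},\chi_{\eta},\chi_{\overline{\eta}}$, so $\dim H^1(Y_n,\Theta_{Y_n})^G=\langle\chi_{\psi},\chi_{triv}\rangle=0$.

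The main obstacle is not any single computation but the disciplined equivariant accounting of duals and complex conjugates: it is dangerously easy to interchange $\chi_{\epsilon}$ with $\chi_{\epsilon^2}$ or $\chi_{\eta}$ with $\chi_{\overline{\eta}}$. The safeguard is to pass every group through Serre duality in the same normalization and to remember that $\psi$ is defined on the dual of $H^1$, which introduces exactly one final conjugation; getting these conjugations consistent is what makes the final character land on $(n-1)^2\chi_{\epsilon}+n\,\chi_{\overline{\eta}}+\chi_{\eta}$ rather than its conjugate.
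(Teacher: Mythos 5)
Your proposal is correct and follows essentially the same route as the paper: split $\Theta_{Y_n}$ along the factors, apply the equivariant K\"unneth formula, feed in the characters from Lemma \ref{repsforms}, and use the identities $\chi_{\epsilon}\chi_{\epsilon}=\chi_{\epsilon^2}$ and $\chi_{\epsilon}\chi_{\eta}=\chi_{\eta}$ (the paper's $\chi_{\epsilon^2}\chi_{\overline{\eta}}=\chi_{\overline{\eta}}$ after dualizing). The only cosmetic difference is that you apply Serre duality factor by factor and conjugate once at the end, whereas the paper dualizes the whole K\"unneth decomposition first and identifies $H^1(\omega_F^{\otimes 2})\simeq H^{0}(\omega_F)$ via the triviality of the action on $dz\wedge d\overline{z}$; both ledgers balance to $(n-1)^2\chi_{\epsilon}+n\chi_{\overline{\eta}}+\chi_{\eta}$.
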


\begin{proof}
In  the proof we write $F_j$ for the Fermat curve 
$F$ at position $j$ in the product $Y_n$ and  denote the projection to the $j$-th factor by $p_j$. 
The tangent bundle $\Theta_{Y_n}$ can be written as 
\[
\Theta_{Y_n}=\bigg(\bigoplus_{j=1}^{n-1} p_j^{\ast}\Theta_{F_j} \bigg)  \oplus  p_n^{\ast}\Theta_Q,  
\]
which implies 
\[
H^{1}(\Theta_{Y_n}) \simeq \bigg(\bigoplus_{j=1}^{n-1} H^1(p_j^{\ast}\Theta_{F_j})\bigg) \oplus H^1(p_n^{\ast}\Theta_Q). 
\]
The sum  can be decomposed further using  K\"unneth's formula:  
\[
H^1(p_j^{\ast}\Theta_{F_j}) \simeq H^{1}(\Theta_{F_j}) \oplus H^{0}(\Theta_{F_j}) \otimes 
\bigg(\bigoplus_{i \neq j}^{n-1} H^1(\mathcal O_{F_i}) \oplus H^1( \mathcal O_Q)  \bigg),   
\]
and
$H^1(p_n^{\ast}\Theta_{Q}) \simeq H^1(\Theta_Q)$.
Dualising yields:
$$H^1(p_j^{\ast}\Theta_{F_j})^{\vee} \simeq  H^0(\omega_{F_j}^{\otimes 2}) \oplus H^1(\omega_{F_j}^{\otimes 2}) 
\otimes 
\bigg(\bigoplus_{i \neq j}^{n-1} H^0(\omega_{F_i}) \oplus H^0( \omega_Q)  \bigg), $$
and $H^1(p_n^{\ast}\Theta_{Q})^{\vee} \simeq H^0(\omega_Q^{\otimes 2})$.
We point out that the $G$ action on the wedge product  $dz \wedge d\overline{z}$ is trivial, showing  that 
$$H^1(\omega_{F_j}^{\otimes 2})\simeq H^{1,1}(F_j,\omega_{F_j}) \simeq \langle (dz \wedge d\overline{z})\otimes dz \rangle$$
and $H^0(\omega_{F_j})$ are equivalent representations.
By Lemma \ref{repsforms}  the character of $\psi$ is 
$$\chi_{\psi}=(n-1) \big[\chi_{\epsilon} + (n-2)\chi_{\epsilon^2}^2 +\chi_{\epsilon^2}\chi_{\overline{\eta}}\big] + \chi_{\eta} + \chi_{\overline{\eta}}
= (n-1)^2 \chi_{\epsilon} + n\chi_{\overline{\eta}} + \chi_{\eta},$$
thanks to  the identity $\chi_{\epsilon^2} \chi_{\overline{\eta}}= \chi_{\overline{\eta}}$. 
 \end{proof}

\section{Toric resolutions and direct images of the tangent sheaf}

\noindent 
In this section we prove the following proposition:

\begin{proposition}\label{resproposition}
For each $n \geq 3$ there exists a resolution $\rho \colon \hat{X}_n \to X_n$ of singularities of the quotient variety $X_n= (F^{n-1} \times Q) /G$ 
with the following properties: 
\begin{enumerate}
\item $\rho_{\ast} \Theta_{\hat{X}_n} \simeq \Theta_{X_n}$, and  
\item $R^1\rho_{\ast} \Theta_{\hat{X}_n} =0$. 
\end{enumerate}
\end{proposition}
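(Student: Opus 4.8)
The plan is to construct the resolution singularity by singularity using toric geometry and to verify the two conditions locally, since both $\rho_\ast \Theta_{\hat X_n}$ and $R^1\rho_\ast\Theta_{\hat X_n}$ are supported on (and can be checked over) the singular locus of $X_n$. By Proposition \ref{singsandkod} the singularities of $X_n$ are of the two types $\frac{1}{n}(1,\ldots,1)$ [read: $\frac{1}{3}(1,\ldots,1)$] and $\frac{1}{3}(1,\ldots,1,2)$, each being an isolated cyclic quotient singularity $\CC^n/(\ZZ/3)$. These are affine toric varieties: each corresponds to a simplicial cone $\sigma$ in $N_\RR$ where $N$ is the overlattice of $\ZZ^n$ obtained by adjoining the vector $\tfrac{1}{3}(a_1,\ldots,a_n)$ with weights $(a_i)$ equal to $(1,\ldots,1)$ or $(1,\ldots,1,2)$. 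The resolution $\rho$ will be given by a projective subdivision $\Sigma'$ of $\sigma$ into smooth cones, chosen the same way at every singular point, and the global $\hat X_n$ is obtained by gluing these local toric resolutions into the smooth part of $X_n$.

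First I would fix, for each of the two singularity types, an explicit regular (smooth, i.e. each maximal cone generated by a lattice basis of $N$) subdivision of $\sigma$. For the type $\frac{1}{3}(1,\ldots,1)$ the natural choice is the star subdivision at the single ray spanned by $\tfrac{1}{3}(1,\ldots,1)$ — this is a single weighted blow-up producing one exceptional divisor, and one checks directly that the resulting fan is regular. For $\frac{1}{3}(1,\ldots,1,2)$ a slightly more careful subdivision is needed, but again a star subdivision (or short chain of star subdivisions) at the appropriate interior lattice point(s) of $\sigma$ yields a regular fan. In both cases the exceptional locus is toric and its combinatorics are completely explicit.

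Next I would compute the two direct image sheaves locally on each toric chart. The key computational input is the toric description of the tangent sheaf: for a smooth toric variety the tangent sheaf sits in the generalized Euler sequence built from the rays of $\Sigma'$, and its cohomology is controlled by the $N$-graded pieces indexed by lattice points, à la Demazure/Oda. Condition (1), $\rho_\ast\Theta_{\hat X_n}\simeq\Theta_{X_n}$, amounts to showing that a torus-invariant vector field on the smooth part extends across the exceptional divisor, equivalently that no new global sections of $\Theta$ appear upstairs; this is a finite check in the relevant graded degrees. Condition (2), $R^1\rho_\ast\Theta_{\hat X_n}=0$, is by the theorem on formal functions (or, for an affine toric base, directly) the vanishing of $H^1(\hat U,\Theta_{\hat U})$ for the local toric resolution $\hat U$ of each singularity, again a graded cohomology computation on the fan $\Sigma'$.

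I expect the main obstacle to be condition (2), the higher direct image vanishing, and the choice of subdivision that makes it hold: one must subdivide finely enough to resolve the singularity yet control the cohomology $H^1(\hat U,\Theta_{\hat U})$ so that it vanishes in every graded piece. This is where the specific weights $(1,\ldots,1)$ versus $(1,\ldots,1,2)$, and the dimension hypothesis $n\geq 3$, enter decisively, and it is the step that requires genuine toric cohomology bookkeeping rather than a formal argument. Once both local vanishings are in place, gluing is routine and Proposition \ref{5term} then delivers the main theorem, so the technical heart of the whole paper lies in this single local toric computation.
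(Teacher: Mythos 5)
Your strategy is exactly the paper's: localize at the isolated singularities, resolve each of the two types $\frac{1}{3}(1,\ldots,1)$ and $\frac{1}{3}(1,\ldots,1,2)$ by an explicit regular star subdivision, and verify (1) and (2) chart by chart via the toric Euler sequence and graded cohomology. As an outline it is correct, and you correctly locate where $n\geq 3$ must enter. But as a proof it stops precisely where the mathematical content begins: every decisive step is deferred to ``a finite check in the relevant graded degrees'' or ``genuine toric cohomology bookkeeping'' without being performed, and these checks are the proposition. Concretely, what is missing is: (a) the actual subdivision for $\frac{1}{3}(1,\ldots,1,2)$ --- the paper takes the star subdivision at $v=\frac{1}{3}(1,\ldots,1,2)$, observes that one maximal cone is still singular, and subdivides again at $v'=\frac{1}{3}(2,\ldots,2,1)$ (the Danilov resolution), producing two exceptional divisors $E'\simeq\mathbb P^{n-1}$ and $E\simeq\mathbb P(\mathcal O\oplus\mathcal O(2))\to\mathbb P^{n-2}$; (b) for condition (1), a usable criterion --- the paper proves, via the pushed-forward Euler sequence, that $\rho_{\ast}\Theta_{\hat X}=\Theta_X$ holds iff the polyhedra $P_{D_i}$ upstairs and $P_{D_i'}$ downstairs contain the same lattice points, and then verifies this by a divisibility-by-$3$ argument (note these polyhedra are unbounded, so this is not literally a finite check but a uniform one); (c) for condition (2), the reduction via the Euler sequence to $R^1\rho_{\ast}\mathcal O(D_i)=0$ (Demazure vanishing, since these sheaves are globally generated) and $H^1(E,\mathcal O_E(E))=0$, the last of which for the $\mathbb P(\mathcal O\oplus\mathcal O(2))$-bundle requires identifying $\mathcal O_E(E)\simeq p_r^{\ast}\mathcal O_{\mathbb P^{n-2}}(-3)\otimes\mathcal O_E(-2E')$ and a Serre-duality computation giving $H^{n-2}(\mathbb P^{n-2},\mathcal O(-n+2))=0$. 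This last point is the one place where a ``formal argument'' genuinely fails and where your plan, as written, gives no guarantee the chosen subdivision works.

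One small conceptual slip: you say condition (1) is equivalent to ``no new global sections of $\Theta$ appearing upstairs.'' The inclusion $\rho_{\ast}\Theta_{\hat X}\subset\Theta_X$ is automatic because $\Theta_X$ is reflexive and $\rho$ is an isomorphism away from the exceptional locus; sections can only be \emph{lost}, never gained. The correct formulation is the first one you give (every reflexive vector field downstairs extends across the exceptional divisor), i.e.\ surjectivity, which is what the polyhedron criterion encodes.
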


\noindent

The proposition leads immediately to a proof of theorem \ref{main}.
More precisely, we have:

\begin{theorem}
Let $n \geq 3$ and let  $\rho \colon \hat{X}_n \to X_n$ be the resolution of singularities (constructed in the above proposition) of $X_n = (F^{n-1} \times Q) /G$.
Then it holds:
\begin{enumerate}
\item $H^1(\hat{X}_n, \Theta_{\hat{X}_n}) = 0$, i.e., $\hat{X}_n$ is infinitesimally rigid;
\item $\kappa(\hat{X}_n) = 1$.
\end{enumerate}
\end{theorem}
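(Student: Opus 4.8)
The plan is to deduce the final theorem directly from the results already assembled in the excerpt, since essentially all the work has been done in Propositions \ref{5term}, \ref{singsandkod}, \ref{rigidG} and \ref{resproposition}. I would first address part (1). The key observation is that the $G$-action on $Y_n = F^{n-1} \times Q$ is free in codimension one: by Proposition \ref{stabs_orbs} the points with non-trivial stabilizer on each curve are isolated, so the fixed loci on the product have codimension at least two (in fact, a point of $Y_n$ has non-trivial stabilizer only if \emph{every} coordinate is one of the finitely many special points, giving codimension $n \geq 3$). This verifies the standing hypothesis of Proposition \ref{5term}. The resolution $\rho \colon \hat{X}_n \to X_n$ furnished by Proposition \ref{resproposition} satisfies exactly conditions (1) and (2) of Proposition \ref{5term}, namely $\rho_{\ast}\Theta_{\hat{X}_n} \simeq \Theta_{X_n}$ and $R^1\rho_{\ast}\Theta_{\hat{X}_n} = 0$.

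Consequently Proposition \ref{5term} yields the isomorphism
\[
H^1(\hat{X}_n, \Theta_{\hat{X}_n}) \simeq H^1(Y_n, \Theta_{Y_n})^G.
\]
The right-hand side vanishes by Proposition \ref{rigidG}, where the character computation $\chi_{\psi} = (n-1)^2\chi_{\epsilon} + n\chi_{\overline{\eta}} + \chi_{\eta}$ shows that the trivial character $\chi_{triv}$ does not occur, so $H^1(Y_n, \Theta_{Y_n})^G = 0$. Combining the two facts gives $H^1(\hat{X}_n, \Theta_{\hat{X}_n}) = 0$, which is precisely the statement that $\hat{X}_n$ is infinitesimally rigid. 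This establishes (1).

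For part (2) I would argue that the resolution does not alter the Kodaira dimension. By Proposition \ref{singsandkod} the singular model $X_n$ has canonical singularities and $\kappa(X_n) = 1$. Since canonical singularities are precisely the class for which the plurigenera are preserved under a resolution, any resolution $\rho \colon \hat{X}_n \to X_n$ satisfies $H^0(\hat{X}_n, \omega_{\hat{X}_n}^{\otimes m}) \simeq H^0(X_n, \omega_{X_n}^{[\otimes m]})$ for all $m \geq 0$, and hence $\kappa(\hat{X}_n) = \kappa(X_n) = 1$. Here I use that the resolution of Proposition \ref{resproposition} is a resolution in the usual sense, so the comparison of plurigenera applies verbatim.

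The main obstacle is not in this final deduction, which is essentially a bookkeeping assembly of the earlier propositions, but in verifying the input hypotheses cleanly; in particular one must be careful that the freeness in codimension one of the $G$-action is genuinely guaranteed by the isolated-fixed-point analysis of Proposition \ref{stabs_orbs} across the whole product, so that Proposition \ref{5term} applies. The substantive mathematical content — the construction of a resolution with the precise properties (1) and (2) on the direct images of the tangent sheaf — resides entirely in the toric argument of Proposition \ref{resproposition}, which I would invoke as a black box here. Once that is in hand, the theorem follows by the chain of isomorphisms and vanishings described above.
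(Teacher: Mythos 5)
Your proof is correct and follows essentially the same route as the paper: part (1) is the combination of Propositions \ref{rigidG}, \ref{resproposition} and \ref{5term} (your explicit check that the $G$-action is free in codimension one --- indeed the fixed locus is finite --- is a hypothesis the paper leaves implicit but which holds exactly as you argue), and part (2) is the invariance of plurigenera under resolution for canonical singularities together with Proposition \ref{singsandkod}.
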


\begin{proof}
1) By Proposition \ref{rigidG}  and \ref{5term}, it holds 
$0=H^1(Y_n,\Theta_{Y_n})^G = H^1(\hat{X}_n, \Theta_{\hat{X}_n})$.
2) Proposition \ref{singsandkod} tells us that $X_n$ has canonical singularities and $\kappa(X_n)=1$, thus 
$\kappa(\hat{X}_n) = 1$.
\end{proof}

Moreover we can prove the following:
\begin{corollary}
There are rigid, but not infinitesimally rigid, manifolds of dimension $n \geq 5$ and Kodaira dimension $3$.
\end{corollary}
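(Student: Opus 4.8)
The plan is to take the products $\hat{X}_n \times S$, where $S$ is a rigid, but not infinitesimally rigid, surface; such a surface is constructed in \cite{notinfinitesimally}. By the classification of rigid surfaces recalled above, $S$ is necessarily minimal of general type, so $\kappa(S)=2$ and $H^0(S,\Theta_S)=0$. Since the Kodaira dimension is additive on products of projective manifolds, $\kappa(\hat{X}_n \times S)=\kappa(\hat{X}_n)+\kappa(S)=1+2=3$, while $\dim(\hat{X}_n \times S)=n+2$ runs through all integers $\geq 5$ as $n$ runs through all integers $\geq 3$. It then remains to prove that each $\hat{X}_n \times S$ is rigid but not infinitesimally rigid.

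First I would record two vanishing results for the factor $\hat{X}_n$, namely $H^0(\hat{X}_n,\Theta_{\hat{X}_n})=0$ and $H^1(\hat{X}_n,\mathcal{O}_{\hat{X}_n})=0$. Both are obtained with the methods already used for Proposition \ref{rigidG}: since the $G$-action on $Y_n$ is free in codimension one and $\rho_{\ast}\Theta_{\hat{X}_n}=\Theta_{X_n}$, Remark \ref{tracemap} identifies the first group with $H^0(Y_n,\Theta_{Y_n})^G$; and as $X_n$ has rational (indeed canonical) singularities, the identities $\rho_{\ast}\mathcal{O}_{\hat{X}_n}=\mathcal{O}_{X_n}$ and $R^1\rho_{\ast}\mathcal{O}_{\hat{X}_n}=0$ identify the second with $H^1(Y_n,\mathcal{O}_{Y_n})^G$. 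Both invariant spaces vanish because the induced $G$-representations on $H^0(\Theta_{F_j})$, $H^1(\mathcal{O}_{F_j})$ and $H^1(\mathcal{O}_Q)$ contain no trivial character (and $H^0(\Theta_Q)=0$), exactly as in the proof of Proposition \ref{rigidG}. Plugging these into the Künneth decomposition of $\Theta_{\hat{X}_n \times S}=p_1^{\ast}\Theta_{\hat{X}_n}\oplus p_2^{\ast}\Theta_S$ annihilates both mixed terms $H^0(\Theta_{\hat{X}_n})\otimes H^1(\mathcal{O}_S)$ and $H^0(\Theta_S)\otimes H^1(\mathcal{O}_{\hat{X}_n})$, and since $\hat{X}_n$ is infinitesimally rigid one is left with
\[
H^1(\hat{X}_n \times S,\Theta_{\hat{X}_n \times S})\simeq H^1(S,\Theta_S)\neq 0,
\]
so the product is not infinitesimally rigid.

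The hard part is rigidity. Here the strategy is to prove that the natural morphism of Kuranishi germs $\Phi\colon \Def(\hat{X}_n)\times\Def(S)\to\Def(\hat{X}_n \times S)$, given by forming product deformations, is an isomorphism. As $\Def(\hat{X}_n)$ is a reduced point and $\Def(S)$ is $0$-dimensional (because $S$ is rigid, by Remark \ref{kuranishi}), this forces $\Def(\hat{X}_n \times S)$ to be $0$-dimensional, i.e. the product is rigid. The computation of the previous paragraph shows that $d\Phi$ is an isomorphism on Zariski tangent spaces, whence $\Phi$ is a closed immersion; the remaining issue is to match the obstructions. For this I would use the differential graded Lie algebra governing the deformations together with the fact that the splitting $\Theta_{\hat{X}_n \times S}=p_1^{\ast}\Theta_{\hat{X}_n}\oplus p_2^{\ast}\Theta_S$ is compatible with the Lie bracket, because the bracket of two vector fields tangent to the $S$-factor is again tangent to the $S$-factor. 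Consequently the $H^2(\hat{X}_n,\Theta_{\hat{X}_n})$-component of the Kuranishi obstruction map vanishes identically on the classes coming from $H^1(S,\Theta_S)$, while its $H^2(S,\Theta_S)$-component is exactly the obstruction map of $S$. Hence $\Def(\hat{X}_n \times S)$ is cut out inside $H^1(S,\Theta_S)$ by the same equations as $\Def(S)$, which yields the desired isomorphism.

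I expect this obstruction-matching to be the only genuinely delicate point, the rest being a routine cohomology computation; if one prefers to bypass the explicit bracket bookkeeping, one may instead invoke a product–rigidity statement of the type already used in \cite{rigidity}, the vanishings of the second paragraph being precisely the hypotheses that make it applicable. In either case the construction produces rigid, non infinitesimally rigid manifolds in every dimension $n\geq 5$ of Kodaira dimension $3$, thereby filling the case left open by Theorem \ref{higherdim}.
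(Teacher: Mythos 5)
Your proposal is correct and follows essentially the same route as the paper: form the product of $\hat{X}_n$ with the rigid, non infinitesimally rigid surface of \cite{notinfinitesimally}, kill the mixed K\"unneth terms so that $H^1(\Theta_{S\times\hat{X}_n})\simeq H^1(S,\Theta_S)\neq 0$, and deduce rigidity from the product decomposition $\Def(S\times\hat{X}_n)=\Def(S)\times\Def(\hat{X}_n)$. The only differences are that the paper kills the mixed terms using the regularity and general type of the surface factor rather than the vanishings $H^0(\Theta_{\hat{X}_n})=H^1(\mathcal{O}_{\hat{X}_n})=0$ you establish, and that it simply cites \cite[Lemma 5.2]{notinfinitesimally} for the Kuranishi product decomposition rather than re-deriving it (your DGLA sketch glosses over the fact that the cross-brackets in $p_1^{\ast}\Theta_{\hat{X}_n}\oplus p_2^{\ast}\Theta_S$ need not vanish, but the citation you offer as a fallback is exactly what is needed).
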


\begin{proof}
In  \cite{notinfinitesimally} the authors construct for each even number $d \geq 8$, not divisible by $3$, a rigid regular smooth algebraic surface $S_d$ of general type 
with $H^1(S_d,\Theta_{S_d}) \simeq \mathbb C^6$. The product $S_d \times \hat{X}_n$ is a projective manifold of Kodaira dimension $3$. 
By K\"unneth's formula 
$$H^1(S_d \times \hat{X}_n,\Theta_{S_d \times \hat{X}_n})= H^1(S_d,\Theta_{S_d}) \oplus  H^1(\hat{X}_n,\Theta_{\hat{X}_n})  \simeq \mathbb C^6,$$
because $S_d$ is regular and of general type. Thus $S_d \times \hat{X}_n$ is not infinitesimally rigid.
However, the product $S_d \times \hat{X}_n$ is rigid, because the factors are rigid and the base of the  Kuranishi family is a product 
$$ \Def(S_d \times \hat{X}_n)= \Def(S_d) \times \Def(\hat{X}_n)$$ according to  \cite[Lemma 5.2]{notinfinitesimally}.
\end{proof}

\noindent
\begin{rem}\label{resrem}
 \
\begin{enumerate}
\item
To construct a resolution of $X_n=(F^{n-1} \times Q) /G$ with the properties \emph{(1)} and \emph{(2)} of Proposition \ref{resproposition} is a local problem, because 
the singularities $\frac{1}{3}(1,\ldots,1)$  and $\frac{1}{3}(1,\ldots,1,2)$
of $X_n$ are isolated. Locally, the germs of these singularities are represented by affine toric varieties. This allows us to use tools from 
toric geometry to construct such a resolution.  The basic references in toric geometry are \cite{F93} and \cite{CLS11}.
\item
For any resolution $\rho \colon \hat{X} \to X$  of a normal variety $X$,  the direct image 
$\rho_{\ast} \Theta_{\hat{X}}$  is a subsheaf  of the reflexive sheaf $\Theta_{X}$. This inclusion is an equality if and only if $\rho_{\ast} \Theta_{\hat{X}}$ is reflexive. 
Observe that even in very simple situations the inclusion can be strict:  e.g. take the  blowup of the origin of $\mathbb C^2$. For $n=2$ compare \cite[Proposition (1.2)]{burnswahl}.
\item
Similarly, the vanishing of $R^1\rho_{\ast} \Theta_{\hat{X}}$ for a resolution  $\rho \colon \hat{X} \to X$  of a normal variety $X$ is not automatic: 
take the resolution of an $A_1$ surface singularity by a $-2$ curve, then  $R^1\rho_{\ast} \Theta_{\hat{X}}$ is a skyscraper sheaf at the singular point with value 
$H^1\big(\mathbb P^1,\mathcal O(-2)) \simeq \mathbb C$.
 More generally, for ADE surface singularities $R^1\rho_{\ast} \Theta_{\hat{X}}$  is never zero, compare
 \cite{burnswahl}, \cite{pinkham}, \cite{schlessinger}.
 \end{enumerate}
\end{rem}
 
 \noindent
 {\bf The toric blowup of  $\frac{1}{3}(1,\ldots,1)$}
 
 \noindent
 The singularity $\frac{1}{3}(1,\ldots,1)$ is the affine toric varitey 
 $U$ given by the lattice 
 \[
 N=\mathbb Z e_1 + \ldots + \mathbb Z e_{n-1} + \frac{\mathbb Z}{3}(1, \ldots ,1),  
 \]
 and the cone 
 $\sigma=\cone(e_1,\ldots ,e_n)$.
The star subdivision of $\sigma$ along the ray generated by $v:=\frac{1}{3}(1, \ldots ,1)$ yields a fan  
$\Sigma$ with the following $n$-dimensional cones 
\[
\sigma_i := \cone(e_1, \ldots, \widehat{e_i}, \ldots e_n,v),  \ \  1 \leq i \leq n. 
\]
For $n=3$ the picture is:

\begin{center}
\begin{tikzpicture}

\draw[fill] (0,3) circle [radius=0.03];
\draw[fill] (-2.598,-1.5) circle [radius=0.03];
\draw[fill] (2.598,-1.5)  circle [radius=0.03];
\draw[fill] (0,0) circle [radius=0.03];

\node at (0,3.3) {$e_3$};
\node at (-2.898,-1.5) {$e_1$};
\node at (2.898,-1.5) {$e_2$};
\node at (0.3,0.27) {$v$};

\draw (-2.598,-1.5) -- (2.598,-1.5); 
\draw (2.598,-1.5) -- (0,3); 
\draw (0,3) -- (-2.598,-1.5); 

\draw (-2.598,-1.5) -- (0,0); 
\draw (2.598,-1.5) -- (0,0); 
\draw (0,3) -- (0,0); 

\node at (0,-0.9) {$\sigma_3$};
\node at (0.9,0.5) {$\sigma_1$};
\node at (-0.9,0.5) {$\sigma_2$};
\end{tikzpicture}
\end{center}

Since the cones $\sigma_i$ are smooth, the subdivision induces a resolution $\rho \colon U_{\Sigma} \to U$, where $U_{\Sigma}$ is 
the toric variety of the fan $\Sigma$.  The resolution admits a single exceptional prime divisor $E$: 
it is the divisor corresponding to the ray $\mathbb R_{\geq 0} v$.  In the sequel, we denote the divisors 
corresponding to the  rays $\mathbb R_{\geq 0}e_i$ by $D_i$.
The resolution is called the \emph{toric blowup} of  $\frac{1}{3}(1,\ldots,1)$.

\noindent  
{\bf The Danilov resolution of $\frac{1}{3}(1,\ldots,1,2)$}
 
\noindent 
 The singularity $\frac{1}{3}(1,\ldots,1,2)$ is the affine toric variety $U$ given by the lattice 
 $$
 N=\mathbb Z e_1 + \ldots  + \mathbb Z e_{n-1} + \frac{\mathbb Z}{3}(1, \ldots ,1,2)
 $$
and the cone $\sigma=\cone(e_1,\ldots ,e_n)$. 

 The star subdivision along the ray generated by 
$v=\frac{1}{3}(1,\ldots,1,2)$ yields a fan with maximal cones 
\[
\sigma_i=\cone(e_1, \ldots,  \widehat{e_i}, \ldots, e_n,v), \ \ 1 \leq i \leq n. 
\]
All of these cones are smooth, with the exception of $\sigma_n$. Indeed, for $i \neq n$,  the vectors  
$\lbrace e_1, \ldots, \widehat{e_i}, \ldots, e_n,v\rbrace$ form a $\mathbb Z$-basis of $N$, but $e_n \notin \langle e_1, \ldots, e_{n-1},v \rangle_{\mathbb Z}$.
Therefore, we need a further subdivision of  $\sigma_n$, this time along the ray generated by 
$v'=\frac{1}{3}(2,\ldots,2,1) \in \sigma_n$. 
The maximal cones are:
\[
\tau_i =\cone(e_1, \ldots,  \widehat{e_i},  \ldots, e_{n-1},v,v'), \ \ 1 \leq i \leq n-1
\]
and $\tau_n=\cone(e_1,\ldots ,e_{n-1},v')$. The picture below illustrates the subdivision  in dimension three:

\begin{center}
\begin{tikzpicture}

\draw[fill] (0,3) circle [radius=0.03];
\draw[fill] (-2.598,-1.5) circle [radius=0.03];
\draw[fill] (2.598,-1.5)  circle [radius=0.03];
\draw[fill] (0,0) circle [radius=0.03];

\draw[fill] (0,1.5) circle [radius=0.03];

\node at (0,3.3) {$e_3$};
\node at (-2.898,-1.5) {$e_1$};
\node at (2.898,-1.5) {$e_2$};
\node at (0.3,0.27) {$v'$};
\node at (0.3,1.77) {$v$};

\draw (-2.598,-1.5) -- (2.598,-1.5); 
\draw (2.598,-1.5) -- (0,3); 
\draw (0,3) -- (-2.598,-1.5); 

\draw (-2.598,-1.5) -- (0,1.5); 
\draw (2.598,-1.5)  -- (0,1.5); 

\draw (-2.598,-1.5) -- (0,0); 
\draw (2.598,-1.5) -- (0,0); 
\draw (0,3) -- (0,0); 

\node at (0,-0.9) {$\tau_3$};
\node at (0.8,-0.0) {$\tau_1$};
\node at (0.8, 1.0) {$\sigma_1$};
\node at (-0.8, 1.0) {$\sigma_2$};
\node at (-0.8,-0.0) {$\tau_2$};
\end{tikzpicture}
\end{center}
Since the cones $\tau_i$ are smooth, we have a resolution 
$\rho \colon U_{\Sigma} \to U$ with two exceptional prime divisors $E$ and $E'$ corresponding to the rays generated by 
$v$ and $v'$, respectively. The  fan $\Sigma$ of the resolution consists of the cones  
$\sigma_i, \ldots, \sigma_{n-1}, \tau_1, \ldots, \tau_n$ and their faces. As above, we denote the divisors which correspond to the  rays $\mathbb R_{\geq 0}e_i$ by $D_i$.  
In compliance with  \cite[p. 381]{R87} the resolution is called the \emph{Danilov resolution}.

\begin{proposition}\label{projectivbundle} \
\begin{enumerate}
\item
The exceptional prime divisor of the toric blowup of $\frac{1}{3}(1,\ldots,1)$ is isomorphic to $\mathbb P^{n-1}$. 
\item
The exceptional prime divisor $E'$ of the Danilov resolution of $\frac{1}{3}(1,\ldots,1,2)$ is  isomorphic to $\mathbb P^{n-1}$ and the exceptional prime divisor 
 $E$ is isomorphic to the projective bundle 
 $$p_r\colon E \simeq \mathbb P\big(\mathcal O \oplus \mathcal O(2)\big) \to \mathbb P^{n-2}.$$ 
In particular,  
$$K_{E}\simeq  p_r^{\ast} \mathcal O_{ \mathbb P^{n-2}}(-n-1) \otimes \mathcal O_E\big(-2E').$$
\end{enumerate}
\end{proposition}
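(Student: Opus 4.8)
The whole proposition is a computation in toric geometry, and the plan is to reduce each claim to reading off a quotient fan via the orbit--closure description of a torus-invariant prime divisor: if $\rho$ is a ray of a fan $\Sigma$ in $N$ with primitive generator $u$, then $D_\rho$ is the toric variety of the \emph{star fan} $\operatorname{Star}(\rho)$ in the quotient lattice $N(\rho):=N/\mathbb Z u$, whose cones are the images of the cones of $\Sigma$ that contain $\rho$ (\cite{CLS11}*{Prop.~3.2.7}). The canonical class will then be obtained from the toric formula $K_{D_\rho}=-\sum_{\bar\rho}\overline D_{\bar\rho}$ together with the linear equivalences in $\Pic$ coming from the characters $m\in M(\rho)=\Hom(N(\rho),\mathbb Z)$.

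For statement (1), and for the divisor $E'$ in (2), I would simply recognise the fan of projective space. In the toric blowup the images $\bar e_1,\dots,\bar e_{n-1}$ are a $\mathbb Z$-basis of $N(v)$ (because $e_1,\dots,e_{n-1},v$ is one of $N$), the relation $3v=e_1+\dots+e_n$ forces $\bar e_n=-(\bar e_1+\dots+\bar e_{n-1})$, and the cones $\overline{\sigma_i}$ ($1\le i\le n$) are exactly those spanned by all but one of the $\bar e_i$: this is the fan of $\mathbb P^{n-1}$. For $E'$ one argues identically in $N(v')$, using that the smooth cone $\tau_n=\cone(e_1,\dots,e_{n-1},v')$ provides a basis and that $e_1+\dots+e_{n-1}+v=2v'$ gives $\bar v=-(\bar e_1+\dots+\bar e_{n-1})$.

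The substantial point is the bundle structure of $E$ in the Danilov resolution. The cones of $\Sigma$ containing $v$ are $\sigma_1,\dots,\sigma_{n-1},\tau_1,\dots,\tau_{n-1}$, so the star fan of $v$ has the $n+1$ rays $\bar e_1,\dots,\bar e_n,\bar v'$. Taking the basis $\bar e_2,\dots,\bar e_n$ of $N(v)$ (from the smooth cone $\sigma_1$), the relations $3v=e_1+\dots+e_{n-1}+2e_n$ and $v'=e_1+\dots+e_n-v$ yield $\bar e_1=-(\bar e_2+\dots+\bar e_{n-1})-2\bar e_n$ and $\bar v'=-\bar e_n$. I would then use the lattice projection $N(v)\to N(v)/\mathbb Z\bar e_n\cong\mathbb Z^{n-2}$: it sends $\bar e_2,\dots,\bar e_{n-1}$ to a basis and $\bar e_1$ to minus their sum, hence carries the star fan onto the fan of $\mathbb P^{n-2}$, while the opposite rays $\bar e_n=-\bar v'$ give the $\mathbb P^1$-fibre. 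Thus $p_r\colon E\to\mathbb P^{n-2}$ is a $\mathbb P^1$-bundle, and the twist is encoded in the remaining relation $\bar e_1+\dots+\bar e_{n-1}=-2\bar e_n=2\bar v'$; the coefficient $2$ identifies $E\simeq\mathbb P(\mathcal O\oplus\mathcal O(2))$, the sign being irrelevant since $\mathbb P(\mathcal E)\simeq\mathbb P(\mathcal E\otimes L)$.

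Finally, for the canonical bundle I would compute in $\Pic(E)$. Feeding the basis of $M(v)$ dual to $\bar e_2,\dots,\bar e_n$ into the relation $\sum_{\bar\rho}\langle m,u_{\bar\rho}\rangle\overline D_{\bar\rho}=0$ gives $\overline D_1=\dots=\overline D_{n-1}=:H=p_r^\ast\mathcal O_{\mathbb P^{n-2}}(1)$ and $\overline D_n=\overline D_{v'}+2H$; moreover $\overline D_{v'}$ is the orbit closure of $\cone(v,v')$, i.e. exactly the restriction $E'|_E$. Substituting into $K_E=-(\overline D_1+\dots+\overline D_n+\overline D_{v'})$ produces $K_E=-(n+1)H-2\overline D_{v'}$, which is the claimed formula $p_r^\ast\mathcal O_{\mathbb P^{n-2}}(-n-1)\otimes\mathcal O_E(-2E')$. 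The only genuine obstacle I foresee is the lattice bookkeeping underlying the bundle identification --- fixing a convenient basis of $N(v)$, pinning down the value of the twist, and matching $\overline D_{v'}$ with $E'|_E$; once these are in place the canonical-class computation is a mechanical application of the toric dictionary.
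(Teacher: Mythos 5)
Your proposal is correct and follows essentially the same route as the paper: identify the star fan of the relevant ray in the quotient lattice, recognize $\mathbb P^{n-1}$ resp.\ the fan of $\mathbb P(\mathcal O\oplus\mathcal O(2))$ over $\mathbb P^{n-2}$ (the paper puts the quotient cones in standard form and cites \cite{CLS11}*{Example 7.3.5}, where you derive the bundle structure from the lattice projection killing $\bar e_n$ and read the twist off the relation $\bar e_1+\dots+\bar e_{n-1}=2\bar v'$), and then compute $K_E$ from the toric canonical divisor together with the linear equivalences $\overline D_1\sim\dots\sim\overline D_{n-1}$ and $\overline D_n\sim\overline D_{v'}+2\overline D_1$. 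The only cosmetic difference is that the paper obtains the needed relation from the single character $e_1-ne_2+e_3+\dots+e_n$ and gets $K_E$ by adjunction from $U_\Sigma$, while you use the full dual basis of $M(v)$ and the intrinsic formula $K_E=-\sum_{\bar\rho}\overline D_{\bar\rho}$; both yield $K_E\simeq p_r^{\ast}\mathcal O_{\mathbb P^{n-2}}(-n-1)\otimes\mathcal O_E(-2E')$.
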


\begin{proof}
1) is a standard computation in toric geometry.

2) We  verify the claim about the divisor $E$ of the Danilov resolution, the analogous (but easier) computation for $E'$ we leave to the reader.
As a compact toric variety $E$ is given by the quotient lattice 
$N(v):=N/\mathbb Z v$ and the quotient cones 
$$
\overline{\tau_i}=\frac{\tau_i + \mathbb Rv}{\mathbb R v} \subset N(v)\otimes \mathbb R 
$$ and 
$$\overline{\sigma_i}=\frac{\sigma_i + \mathbb Rv}{\mathbb R v} \subset N(v)\otimes \mathbb R, \ \  1 \leq i \leq n-1,$$
 
  together with their faces. 
We denote the standard  unit vectors  of $\mathbb Z^{n-1}$ by $u_i$ and set 
$e:=u_{n-1}$ and $u_0:=-(u_1+\ldots + u_{n-2})$.

The quotient lattice $N(v)=N/\mathbb Z v$ is generated by the classes $[e_2], \ldots, [e_n]$ and 
identified with $\mathbb Z^{n-2} \times \mathbb Z$ under  the $\mathbb Z$-linear map 
$$
\phi \colon N(v) \to \mathbb Z^{n-1},
\ \  \  [e_i] \mapsto 
\begin{cases}
u_{i-1}, &  2 \leq i \leq n-1 \\
-e, &  i=n.
\end{cases}
$$
Since $e_1 = 3v-e_2- \ldots -2e_n$ and $v'=2v-e_n$, we have $\phi ([e_1])=u_0+2e$ and $\phi ([v'])=e$. 
The  $\mathbb R$-linear extension of $\phi $  identifies  $N(v)\otimes \mathbb R$ with $\mathbb R^{n-1}$, which  
allows us to view the quotient cones as cones in $\mathbb R^{n-1}$: 
\begin{align*}
	\overline{\tau_i} & \simeq \cone(u_0+2e,u_1,\ldots, \widehat{u_{i-1}}, \ldots, u_{n-2},e), \\
	\overline{\sigma_i} & \simeq \cone(u_0+2e,u_1,\ldots, \widehat{u_{i-1}}, \ldots, u_{n-2},-e). 
\end{align*}
According to \cite[Example 7.3.5]{CLS11} these cones, and their faces, build the fan of $ E \simeq \mathbb P\big(\mathcal O \oplus \mathcal O(2)\big)$. The bundle map 
$p_r \colon E \to \mathbb P^{n-2}$ is induced by the projection 
$\mathbb Z^{n-2} \times \mathbb Z \to \mathbb Z^{n-2}$ onto the first $n-2$ coordinates. 
The adjunction formula and \cite[Theorem 8.2.3]{CLS11} yield
$K_{E} \simeq\mathcal O_E(-D_1 - \ldots - D_n -E').$ 
Finally, by 
$$0 \sim_{lin} \divi(e_1-ne_2+e_3+ \ldots + e_n)= D_1-nD_2 + D_3 + \ldots + D_n -E' $$
and $p_r^{\ast} \mathcal O_{ \mathbb P^{n-2}}(1) \simeq \mathcal O_E(D_2),$
see  \cite[Proposition 6.2.7]{CLS11}, we can write the canonical bundle as 
\begin{align*}
	K_{E} \simeq & ~ \mathcal O_E(-(n+1)D_2 -2E') \\
	 \simeq & ~  p_r^{\ast} \mathcal O_{ \mathbb P^{n-2}}(-n-1) \otimes \mathcal O_E(-2E').
\end{align*}
\end{proof}

\noindent 
\begin{rem}
To illustrate the above proof observe first that for  $n=3$ we obtain the  Hirzebruch surface $\mathbb F_2$ as exceptional divisor $E$ of the Danilov resolution 
of $\frac{1}{3}(1,1,2)$, cf. \cite[Example 3.1.16]{CLS11}. The projection of the cones onto the $x$-axis induces the bundle map as can be seen in the following picture. 

\begin{center}
\begin{tikzpicture}
\draw[fill] (0,0) circle [radius=0.03];
\draw[fill] (0,2) circle [radius=0.03];
\draw[fill] (2,0)  circle [radius=0.03];
\draw[fill] (-2,4) circle [radius=0.03];
\draw[fill] (0,-2) circle [radius=0.03];

\node at (2.5,0.0) {$e_1$};
\node at (0.0,2.5) {$e_2$};
\node at (-2.3,4.5) {$-e_1+2e_2$};

\draw (0,-2) -- (0,0); 
\draw (0,-0) -- (2,0); 
\draw (0,0) -- (0,2); 
\draw (0,0) --  (-2,4); 

\node at (1,1) {$\overline{\tau_1}$};
\node at (-1.5,0.0) {$\overline{\sigma_2}$};
\node at (1,-1) {$\overline{\sigma_1}$};
\node at (-0.9,3.5) {$\overline{\tau_2}$};

\end{tikzpicture}
\end{center}
\end{rem}

\noindent
To prove the isomorphism  $\rho_{\ast} \Theta_{U_{\Sigma}} \simeq \Theta_U$ for the toric blowup of 
 $\frac{1}{3}(1,\ldots,1)$ and for  the Danilov resolution of $\frac{1}{3}(1,\ldots,1,2)$, we consider the 
 slightly more 
general situation of a toric resolution $\rho \colon U_{\Sigma} \to U$ of an affine 
$\mathbb Q$-factorial toric variety. Recall that $U$ is $\mathbb Q$-factorial if and only if the defining cone is 
simplicial i.e. it's minimal generators are $\mathbb R$-linearly independent \cite[Proposition 4.2.7]{CLS11}. 
Our aim is to give a  combinatorial criterion for the inclusion 
$\rho_{\ast} \Theta_{U_{\Sigma}} \subset \Theta_U$ being an 
isomorphism. W.l.o.g. we may assume that $U$ has no torus factors. Then, according to \cite[Theorem 11.4.8]{CLS11}, the variety $U$ is 
an abelian quotient singularity. 
Conversely let $G \subset GL(n,\mathbb C)$ be a finite abelian group without quasi-reflections, then after simultaneous diagonalization 
each element $g \in G$ acts on $\mathbb C^n$ in the following way: 
\[
g=\diag\big(\xi^{\alpha_1(g)}, \ldots, \xi^{\alpha_n(g)}\big), 
\]
where $\xi$ is a primitive $|G|$-th root of unity and  $0 \leq \alpha_i(g) \leq |G| -1$. 
In analogy to the cyclic case, the quotient   $U=\mathbb C^n/G$ is the affine $\mathbb Q$-factorial toric variety given by 
the cone $\sigma:=\cone(e_1,\ldots, e_n)$ and the lattice 
\[
N = \mathbb Z^n + \sum_{g \in G} \frac{\mathbb Z}{|G|}\big(\alpha_1(g), \ldots, \alpha_n(g)\big) \subset \mathbb R^n.
\]

 \begin{proposition}
Let $\rho \colon U_{\Sigma} \to U$ be a toric resolution of an abelian quotient singularity. 
Let $D_i \subset U_{\Sigma}$ and $D_i' \subset U$ be the divisors corresponding to the rays 
$\mathbb R_{\geq 0}e_i$. Then, the inclusion $\rho_{\ast} \Theta_{U_{\Sigma}} \subset \Theta_U$ 
is an isomorphism if and only if the polyhedra 
$P_{D_i}$ and $P_{D_i'}$ contain the same integral points i.e 
\[
P_{D_i} \cap N^{\vee} = P_{D_i'} \cap N^{\vee} \qquad \makebox{for all} \quad  1 \leq i \leq n.  
\]
\end{proposition}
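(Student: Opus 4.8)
The plan is to compute both sheaves degree by degree as $M$-graded modules, where $M:=N^\vee$ is the character lattice, and to match the graded pieces with combinatorial data attached to the rays of the two fans. First I would reduce the statement to an equality of modules of global sections. Since $U$ is affine and both $\Theta_U$ and $\rho_\ast\Theta_{U_\Sigma}$ are reflexive (the pushforward of the locally free $\Theta_{U_\Sigma}$ along the proper birational $\rho$ to the normal variety $U$ is reflexive), each is determined by $\Gamma(U,-)$, so the inclusion of Remark \ref{resrem}(2) is an isomorphism iff the two section modules coincide. Both restrict to the free sheaf $N\otimes_{\mathbb Z}\mathcal{O}$ on the big torus $T_N$, and the torus action makes them $M$-graded; hence it suffices to compare, for every $m\in M$, the subspaces of $N\otimes\mathbb{C}$
\[
\Gamma(U,\Theta_U)_m = \{\, v : \chi^m\partial_v \text{ regular on } U \,\},\qquad
\Gamma(U_\Sigma,\Theta_{U_\Sigma})_m = \bigcap_{\tau}\{\, v : \chi^m\partial_v \text{ regular on } U_\tau \,\},
\]
the intersection running over the maximal smooth cones $\tau$ of $\Sigma$, where $\partial_v$ is the invariant derivation with $\partial_v\chi^{m'}=\langle m',v\rangle\chi^{m'}$.

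Second, I would record the elementary regularity criterion: on the chart $U_\tau$ the field $\chi^m\partial_v$ is regular iff $\langle m',v\rangle=0$ for every lattice point $m'\in\tau^\vee\cap M$ with $m+m'\notin\tau^\vee$. Thus each graded piece is the annihilator of the set $B_\tau(m)$ of these \emph{bad} characters, so $\Gamma(U,\Theta_U)_m$ is the annihilator of $B_\sigma(m)$ and $\Gamma(U_\Sigma,\Theta_{U_\Sigma})_m$ the annihilator of $\bigcup_\tau B_\tau(m)$. Since annihilators reverse the inclusion already known from Remark \ref{resrem}(2), the isomorphism holds iff the bad characters for $\sigma$ and those for the refinement span the same subspace of $M\otimes\mathbb{C}$, for every $m$.

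Third comes the computation of these spans. For $m\in\sigma^\vee$ the bad set is empty on $U$ and on every $U_\tau$, so these degrees never obstruct the isomorphism. For $m\notin\sigma^\vee$ the key point is that bad characters always fill out whole faces of $\sigma^\vee$: if $e_i$ is the unique ray with $\langle m,e_i\rangle<0$ and moreover $\langle m,e_i\rangle=-1$ (equivalently $m$ is a lattice point of $P_{D_i'}$ lying on the facet $\langle\cdot,e_i\rangle=-1$), then $B_\sigma(m)=\{m'\in\sigma^\vee\cap M:\langle m',e_i\rangle=0\}$ spans the hyperplane $\{x:\langle x,e_i\rangle=0\}$, whence $\Gamma(U,\Theta_U)_m=\mathbb{C}\, e_i$; in every other case (two negative rays, or one ray of value $\le -2$) the bad set spans all of $M\otimes\mathbb{C}$ and the piece is $0$. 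Running the same face computation on each smooth cone $\tau$ shows that $\Gamma(U_\Sigma,\Theta_{U_\Sigma})_m=\mathbb{C}\, e_i$ precisely when this $m$ remains of the above type for the refined fan, i.e.\ when in addition $\langle m,w\rangle\ge 0$ for every exceptional ray $w$ of $\Sigma$, and is $0$ otherwise.

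Finally I would translate this into the polyhedra. A lattice point of $P_{D_i'}$ either has $\langle m,e_i\rangle\ge 0$, in which case $m\in\sigma^\vee$ and $\langle m,w\rangle\ge 0$ automatically for the interior exceptional rays $w$, so $m\in P_{D_i}$; or it has $\langle m,e_i\rangle=-1$, i.e.\ it is one of the roots above, and then $m\in P_{D_i}$ iff $\langle m,w\rangle\ge 0$ for all exceptional $w$. Comparing with the third step, $\Gamma(U,\Theta_U)_m=\Gamma(U_\Sigma,\Theta_{U_\Sigma})_m$ for all $m$ iff every such root satisfies the exceptional inequalities, for each $i$, which is exactly $P_{D_i}\cap N^\vee=P_{D_i'}\cap N^\vee$ for all $i$, giving the equivalence. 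I expect the third step to be the main obstacle, specifically the analysis over the singular cone $\sigma$: because $U$ is only $\mathbb{Q}$-factorial, $N^\vee$ is strictly coarser than the lattice one reads off the rays, so one cannot argue as on a smooth chart, and the real content is to verify that the bad characters nonetheless still span the full expected facet (so that no accidental extra section survives on $U$). It is exactly this lattice-point phenomenon that forces the criterion to involve $P_{D_i}\cap N^\vee$ rather than the real polyhedra.
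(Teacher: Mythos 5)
Your argument is correct, but it takes a genuinely different route from the paper's. You compute $\Gamma(U,\Theta_U)$ and $\Gamma(U_\Sigma,\Theta_{U_\Sigma})$ directly as $M$-graded modules of derivations and classify all graded pieces (essentially the Demazure-root description of toric vector fields): the only degrees where the two modules can differ are the lattice points $m$ with $\langle m,e_i\rangle=-1$ and $\langle m,e_j\rangle\ge 0$ for $j\neq i$ --- exactly the points of $(P_{D_i'}\setminus\sigma^\vee)\cap N^\vee$ --- and in such a degree the section $\chi^m\partial_{e_i}$ survives on $U_\Sigma$ precisely when $m$ also satisfies $\langle m,w\rangle\ge 0$ for the exceptional rays $w$, i.e.\ when $m\in P_{D_i}$. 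The paper instead pushes forward the Euler sequence of $U_\Sigma$ (exactness being preserved because $U$ has rational singularities), shows the exceptional summands contribute trivially, obtains $\rho_\ast\Theta_{U_\Sigma}\simeq\bigoplus_i\rho_\ast\mathcal O_{U_\Sigma}(D_i)$, and compares $H^0(\mathcal O_{U_\Sigma}(D_i))$ with $H^0(\mathcal O_U(D_i'))$ via \cite[Proposition 4.3.3]{CLS11}. Your version is more elementary and self-contained; the paper's isolates the structural splitting of $\rho_\ast\Theta_{U_\Sigma}$, which makes the reduction to the polyhedra immediate. Two caveats. First, your parenthetical claim that $\rho_\ast\Theta_{U_\Sigma}$ is automatically reflexive because $\rho$ is proper birational onto a normal variety is false --- were it true, the proposition would be vacuous, and Remark \ref{resrem}(2) gives a counterexample (the blow-up of $\mathbb C^2$ at the origin). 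Fortunately you never actually use it: since $U$ is affine, a morphism of coherent sheaves is an isomorphism if and only if it is one on global sections, which is all your first step needs. Second, the spanning assertions in your third step should be spelled out: that the lattice points of the facet $\sigma^\vee\cap e_i^\perp$ span the hyperplane $e_i^\perp$, and that in the cases of two negative rays or of $\langle m,e_i\rangle\le -2$ the bad set spans all of $M\otimes\mathbb C$ (the latter requires producing a point of $\sigma^\vee\cap N^\vee$ with $0<\langle\,\cdot\,,e_i\rangle<-\langle m,e_i\rangle$). These do hold because $\sigma$ is simplicial with primitive generators $e_i$, and you correctly flag this lattice-point analysis over the singular cone as the real content of the criterion.
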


\begin{proof}

By Remark \ref{resrem} $(2)$, we shall show that $\rho_{\ast}\Theta_{U_{\Sigma}}$ is reflexive if and only if $P_{D_i} \cap N^{\vee} = P_{D_i'} \cap N^{\vee} $. 

The rays of the fan $\Sigma$ are $\mathbb R_{\geq 0} e_i$, 
together 
with $k$ rays $\mathbb R_{\geq 0} v_i$, where $v_i \in N$ is primitive. These rays correspond to $k$ exceptional prime divisors 
$E_i$ of $\rho$. 
On $U_{\Sigma}$ we have an Euler sequence (cf. \cite[Theorem 8.1.6.]{CLS11}): 
\[
0 \to \mathcal O_{U_{\Sigma}}^{\oplus k} \to \bigoplus_{i=1}^n  \mathcal O_{U_{\Sigma}}(D_i)  \oplus 
\bigoplus_{j=1}^k \mathcal O_{U_{\Sigma}}(E_j) \to \Theta_{U_{\Sigma}} \to 0.  
\]
After pushforward, the sequence 
\begin{equation}\label{ex1}
0 \to \mathcal O_{U}^{\oplus k} \to \bigoplus_{i=1}^n  \rho_{\ast}\mathcal O_{U_{\Sigma}}(D_i)  \oplus 
\bigoplus_{j=1}^k \rho_{\ast}\mathcal O_{U_{\Sigma}}(E_j) \to \rho_{\ast}\Theta_{U_{\Sigma}} \to 0 
\end{equation}
is still exact, because $U$ has rational singularities. 

{\bf Claim:}
\[
\alpha \colon \mathcal O_{U}^{\oplus k} \to \bigoplus_{j=1}^k \rho_{\ast}\mathcal O_{U_{\Sigma}}(E_j) 
\]
 is an isomorphism. 
Assuming the claim,  by the exact sequence (\ref{ex1}) we have that
$$\rho_{\ast}\Theta_{U_{\Sigma}} \simeq \bigoplus_{i=1}^n  \rho_{\ast}\mathcal O_{U_{\Sigma}}(D_i).$$ 
Hence $\rho_{\ast}\Theta_{U_{\Sigma}}$    is reflexive if and only if      $\rho_{\ast}\mathcal O_{U_{\Sigma}}(D_i)$ is reflexive for all $i$. 
Since $D_i$ is the strict transform of 
$D_i'$, there is an inclusion $\rho_{\ast}\mathcal O_{U_{\Sigma}}(D_i) \subset \mathcal O_{U}(D_i')$. This inclusion is an isomorphism if and only if  
$\rho_{\ast}\mathcal O_{U_{\Sigma}}(D_i)$ is reflexive. In summary we have that the following  are equivalent:

\begin{enumerate}
\item
$\rho_{\ast}\Theta_{U_{\Sigma}} \simeq  \Theta_U$ 
\item
$\rho_{\ast}\mathcal O_{U_{\Sigma}}(D_i) \simeq  \mathcal O_{U}(D_i')$  for all $i$. 
\end{enumerate}
Since $U$ is affine and $\rho_{\ast}\mathcal O_{U_{\Sigma}}(D_i)$ and $\mathcal O_{U}(D_i')$ are coherent sheaves they are equal if and only if 
$$H^0\big(U_{\Sigma},\mathcal O_{U_{\Sigma}}(D_i)\big) \simeq H^0\big(U,\mathcal O_{U}(D_i')\big)$$
This completes the proof, since  
\[
H^0\big(U_{\Sigma},\mathcal O_{U_{\Sigma}}(D_i)\big)=\bigoplus_{u \in P_{D_i} \cap N^{\vee}} \mathbb C \chi^{u}
\]
and
\[
H^0\big(U,\mathcal O_{U}(D_i')\big)=
\bigoplus_{u \in P_{D_i'} \cap N^{\vee}} \mathbb C \chi^{u}
\]
by \cite[Proposition 4.3.3]{CLS11}.

{\em Proof of the claim.}

We follow the construction of the Euler sequence in \cite[proof of Theorem 8.1.6]{CLS11} and 
start with the exact sequence describing the 
Picard group of $U_{\Sigma}$, see \cite[Theorem 4.1.3]{CLS11}: 
\[
0 \to N^{\vee} \to \bigoplus_{i=1}^n \mathbb Z D_i  \oplus \bigoplus_{j=1}^k \mathbb Z E_j  \to \Pic(U_{\Sigma}) \to 0. 
\]
Here, the map on the left assigns to an element $u \in N^{\vee}$ the principal divisor 
\[
\divi(u)=\sum_{i=1}^n \langle u,e_i\rangle D_i + \sum_{j=1}^k \langle u,v_j\rangle E_j. 
\]
Let $m$ be the order of $G$, then $m e_l \in N^{\vee}$ for all $e_l$ i.e. we have relations 
\[
0 \sim_{lin} \divi(m e_l)= mD_l + \sum_{j=1}^k \langle m e_l,v_j\rangle E_j
\]
in the Picard group. Since $\rk\big(\Pic(U_{\Sigma})\big)=k$, the relations imply that the projection from the second summand of the sequence 
$\gamma \colon \bigoplus_{j=1}^k \mathbb Z E_j  \to \Pic(U_{\Sigma})$ becomes an isomorphism after tensoring with 
$\mathcal O_{U_{\Sigma}}$. This map, which we also denote by $\gamma$, fits into a commutative triangle, where the vertical map is the inclusion: 
\[
\xymatrix{
\mathcal O_{U_{\Sigma}}^{\oplus k} \ar[r]^{\gamma}  & \mathcal O_{U_{\Sigma}}^{\oplus k}  \\
\bigoplus_{j=1}^k \mathcal O_{U_{\Sigma}}(-E_j)  \ar[u] \ar[ru] &   }
\]
After dualizing and pushforward the diagram reads: 
\[
\xymatrix{
\mathcal O_{U}^{\oplus k} \ar[d]  & \ar[l] \ar[ld]\mathcal O_{U}^{\oplus k}  \\
\bigoplus_{j=1}^k \rho_{\ast}\mathcal O_{U_{\Sigma}}(E_j) &   }
\]
The horizontal arrow is still an isomorphism, since $\gamma$ was.  
But now, also the vertical map is an 
isomorphism, since $\rho_{\ast}\mathcal  O_{U_{\Sigma}}(E_j) \simeq \mathcal O_U$ 
and the map is induced by inclusion. 
Therefore, the diagonal map is an isomorphism as well. By construction of the Euler sequence, the diagonal map is the map
$\alpha$ from above. This proves the claim.

\end{proof}

\begin{corollary}\label{1}
Let $\rho \colon U_{\Sigma} \to U$ be the toric blowup of $\frac{1}{3}(1,\ldots,1)$ or the Danilov resolution of 
 $\frac{1}{3}(1,\ldots,1,2)$ , then it holds  
$\rho_{\ast} \Theta_{U_{\Sigma}} \simeq \Theta_U$. 
\end{corollary}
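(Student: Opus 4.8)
The plan is to deduce the corollary from the combinatorial criterion of the preceding Proposition: it suffices to verify, in each of the two cases and for every $1\le i\le n$, that $P_{D_i}\cap N^\vee = P_{D_i'}\cap N^\vee$. The first step is to write down the dual lattice explicitly. For the blowup of $\frac{1}{3}(1,\dots,1)$ one has
\[
N^\vee=\{u\in\mathbb Z^n : u_1+\dots+u_n\equiv 0 \pmod 3\},
\]
while for the Danilov resolution of $\frac{1}{3}(1,\dots,1,2)$ one has
\[
N^\vee=\{u\in\mathbb Z^n : u_1+\dots+u_{n-1}+2u_n\equiv 0 \pmod 3\}.
\]
By the formula for the polyhedron of a torus-invariant divisor (\cite[Proposition 4.3.3]{CLS11}), an integral $u$ lies in $P_{D_i'}$ exactly when $u_i\ge -1$ and $u_j\ge 0$ for $j\ne i$, whereas membership in $P_{D_i}$ imposes the additional inequality $\langle u,v\rangle\ge 0$ (and, in the Danilov case, also $\langle u,v'\rangle\ge 0$) coming from the exceptional rays. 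Since $P_{D_i}$ is cut out by strictly more inequalities, $P_{D_i}\subseteq P_{D_i'}$ holds automatically; the real content of the criterion is the reverse inclusion on lattice points, namely that every $u\in P_{D_i'}\cap N^\vee$ automatically satisfies the exceptional inequalities.

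This is precisely where the congruence defining $N^\vee$ does the work. For the blowup case I would argue that $3\langle u,v\rangle=u_1+\dots+u_n$ is an integer bounded below by $u_i\ge -1$, and the defining congruence forces it to be divisible by $3$; the only multiple of $3$ that is $\ge -1$ is nonnegative, so $\langle u,v\rangle\ge 0$ and $u\in P_{D_i}$. For the Danilov case the argument is identical in spirit but must keep track of both exceptional rays and distinguish $i\ne n$ from $i=n$. The key algebraic identity is
\[
3\langle u,v'\rangle=2u_1+\dots+2u_{n-1}+u_n=2\big(u_1+\dots+u_{n-1}+2u_n\big)-3u_n,
\]
which shows that $3\langle u,v'\rangle$ is divisible by $3$ precisely because $3\langle u,v\rangle$ is. In each subcase the sign hypotheses $u_i\ge -1$, $u_j\ge 0$ give a lower bound of $-1$ or $-2$ for $3\langle u,v\rangle$ and for $3\langle u,v'\rangle$, and divisibility by $3$ again upgrades this bound to nonnegativity.

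The verification is entirely elementary, and I do not expect a genuine obstacle; the only point requiring care is the bookkeeping in the Danilov case, where one must confirm that the single congruence defining $N^\vee$, together with the identity above, simultaneously forces both exceptional inequalities across both ranges of $i$. Once all the inequalities $\langle u,v\rangle\ge 0$ and $\langle u,v'\rangle\ge 0$ are established, the two polyhedra contain the same integral points, so $P_{D_i}\cap N^\vee=P_{D_i'}\cap N^\vee$ for all $i$, and the Proposition yields $\rho_{\ast}\Theta_{U_\Sigma}\simeq\Theta_U$ in both cases.
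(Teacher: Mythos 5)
Your proposal is correct and follows essentially the same route as the paper's proof: both reduce to the combinatorial criterion, describe $N^\vee$ and the polyhedra $P_{D_i}$, $P_{D_i'}$ by the same inequalities, and use the identity $2(u_1+\dots+u_{n-1}+2u_n)=(2u_1+\dots+2u_{n-1}+u_n)+3u_n$ together with the observation that an integer divisible by $3$ and bounded below by $-1$ (resp.\ $-2$) must be nonnegative. No substantive differences.
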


\begin{proof}
In case of the  singularity  $\frac{1}{3}(1,\ldots,1)$ we have 
\begin{align*}
	P_{D_i'}&=\lbrace x \in \mathbb R^n ~ \big\vert ~ x_i \geq -1, ~ x_j \geq 0 ~ \makebox{for} ~ i \neq j \rbrace, \\
	P_{D_i}&= P_{D_i'} \cap \lbrace x_1 + \ldots + x_n \geq 0 \rbrace. 
\end{align*}
Let $x$ be a point in the dual lattice  $N^{\vee} = \lbrace x \in \mathbb Z^n  ~ \big\vert ~ 3  ~ \makebox{divides} ~ (x_1 + \ldots + x_n) \rbrace$, which is also contained in the polyhedron
 $P_{D_i'}$. We have to show that 
$x$ satisfies the inequality $x_1 + \ldots + x_n \geq 0$. This is clear, since $x_1 + \ldots + x_n$ is an integer divisible by $3$ and greater or equal to $-1$. \newline
In case of the  singularity  $\frac{1}{3}(1,\ldots,1,2)$ the polyhedron $P_{D_i'}$ is as above and the points in $P_{D_i}$  fulfill the additional inequalities: 
\begin{align*}
	x_1 + \ldots + x_{n-1} + 2x_n & \geq 0,  \\
	 2x_1 + \ldots + 2x_{n-1} + x_n &\geq 0. 
\end{align*}
Let  $x$ be a point in $N^{\vee} = \lbrace x \in \mathbb Z^n  ~ \big\vert ~ 3  ~ \makebox{divides} ~ (x_1 + \ldots + x_{n-1} + 2x_n) \rbrace$, then 
$ 2x_1 + \ldots + 2x_{n-1} + x_n$ is divisible by $3$, since 
\[
2(x_1 + \ldots + x_{n-1} + 2x_n)= (2x_1 + \ldots + 2x_{n-1} + x_n) + 3 x_n. 
\]
Now the proof that $P_{D_i'} \cap N^{\vee} \subset P_{D_i}$ is as above. 
\end{proof}

\newpage

 \begin{proposition}\label{2}
 Let $\rho \colon U_{\Sigma} \to U$ be 
 \begin{enumerate}
 \item the toric blowup of 
 $\frac{1}{3}(1,\ldots,1)$, or 
 \item the Danilov resolution of 
 $\frac{1}{3}(1,\ldots,1,2)$.
 \end{enumerate}
  Then it holds  $R^1 \rho_{\ast} \Theta_{U_{\Sigma}}=0$. 
 \end{proposition}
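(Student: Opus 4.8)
The plan is to reduce the vanishing, by means of the Euler sequence, to the vanishing of $R^1\rho_{\ast}$ of the invariant divisor sheaves on $U_{\Sigma}$, and then to establish the latter either geometrically (case (1)) or by the combinatorial description of toric cohomology (case (2)). First I would push forward the Euler sequence
\[
0 \to \mathcal O_{U_{\Sigma}}^{\oplus k} \to \bigoplus_{i=1}^n \mathcal O_{U_{\Sigma}}(D_i) \oplus \bigoplus_{j=1}^k \mathcal O_{U_{\Sigma}}(E_j) \to \Theta_{U_{\Sigma}} \to 0
\]
already used in the preceding proof. Toric varieties have rational singularities, so $R^q\rho_{\ast}\mathcal O_{U_{\Sigma}} = 0$ for $q \geq 1$; in particular the first and second higher direct images of $\mathcal O_{U_{\Sigma}}^{\oplus k}$ vanish, and the long exact sequence of higher direct images gives
\[
R^1\rho_{\ast}\Theta_{U_{\Sigma}} \simeq \bigoplus_{i=1}^n R^1\rho_{\ast}\mathcal O_{U_{\Sigma}}(D_i) \oplus \bigoplus_{j=1}^k R^1\rho_{\ast}\mathcal O_{U_{\Sigma}}(E_j).
\]
It therefore suffices to prove that $R^1\rho_{\ast}$ of each invariant divisor sheaf vanishes. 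Since $U$ is affine and each of these sheaves is supported on the contracted (finite) locus, this is in turn equivalent to $H^1(U_{\Sigma},\mathcal O_{U_{\Sigma}}(D)) = 0$ for every $D$ among $D_1,\ldots,D_n$ and the exceptional divisors.

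For the toric blowup of $\frac{1}{3}(1,\ldots,1)$ this follows at once from the geometry. Here $U_{\Sigma}$ is the total space of $\mathcal O_{\mathbb P^{n-1}}(-3)$, with bundle map $\pi \colon U_{\Sigma} \to \mathbb P^{n-1}$ whose zero section is the exceptional divisor $E \simeq \mathbb P^{n-1}$ of Proposition \ref{projectivbundle}. A short computation in $\Cl(U_{\Sigma}) \simeq \mathbb Z$ identifies $\mathcal O_{U_{\Sigma}}(D_i) \simeq \pi^{\ast}\mathcal O_{\mathbb P^{n-1}}(1)$ and $\mathcal O_{U_{\Sigma}}(E) \simeq \pi^{\ast}\mathcal O_{\mathbb P^{n-1}}(-3)$. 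As $\pi$ is affine with $\pi_{\ast}\mathcal O_{U_{\Sigma}} \simeq \bigoplus_{\ell \geq 0}\mathcal O_{\mathbb P^{n-1}}(3\ell)$, the projection formula yields
\[
H^1\big(U_{\Sigma},\pi^{\ast}\mathcal O_{\mathbb P^{n-1}}(d)\big) \simeq \bigoplus_{\ell \geq 0} H^1\big(\mathbb P^{n-1},\mathcal O_{\mathbb P^{n-1}}(d+3\ell)\big),
\]
and every summand vanishes because $n-1 \geq 2$ makes $H^1$ of any line bundle on $\mathbb P^{n-1}$ zero. This is precisely the point where the hypothesis $n \geq 3$ is used.

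For the Danilov resolution of $\frac{1}{3}(1,\ldots,1,2)$ I would instead compute these groups combinatorially, via the description of sheaf cohomology on a toric variety \cite[Theorem 9.1.3]{CLS11}: for an invariant divisor $D = \sum_{\rho} a_{\rho} D_{\rho}$ one has a character decomposition $H^1(U_{\Sigma},\mathcal O_{U_{\Sigma}}(D)) \simeq \bigoplus_{m} \widetilde H^0(V_{D,m};\mathbb C)$, where, writing $R_m := \{ \rho \mid \langle m, u_{\rho}\rangle < -a_{\rho} \}$, the set $V_{D,m}$ is the subcomplex of the link of $\Sigma$ formed by those cones all of whose rays lie in $R_m$. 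The required vanishing then amounts to checking that each $V_{D,m}$ is empty or connected, for $D$ running through $D_1,\ldots,D_n$, $E$ and $E'$, using the explicit rays $v = \frac{1}{3}(1,\ldots,1,2)$ and $v' = \frac{1}{3}(2,\ldots,2,1)$ together with the maximal cones $\sigma_1,\ldots,\sigma_{n-1},\tau_1,\ldots,\tau_n$ of $\Sigma$.

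I expect this combinatorial verification for the Danilov fan to be the main obstacle: in contrast to the blowup, there are two exceptional rays and the non-smooth cone $\sigma_n$ had to be subdivided, so the sets $V_{D,m}$ are genuinely more intricate and the case analysis over all characters $m$ must be carried out carefully. As an independent cross-check one may argue geometrically, using the identifications $E \simeq \mathbb P(\mathcal O \oplus \mathcal O(2))$ and $E' \simeq \mathbb P^{n-1}$ of Proposition \ref{projectivbundle} together with the theorem on formal functions, thereby reducing the vanishing to that of $H^1$ of line bundles on these two rational exceptional divisors.
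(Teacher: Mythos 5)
Your reduction via the Euler sequence and the identification $R^1\rho_{\ast}\mathcal F=0 \Leftrightarrow H^1(U_{\Sigma},\mathcal F)=0$ (valid because $U$ is affine) is exactly the paper's starting point, and your treatment of case (1) is correct and in fact a pleasant alternative: the paper instead handles $\mathcal O_{U_{\Sigma}}(E)$ via the restriction sequence to $E\simeq\mathbb P^{n-1}$ and the $\mathcal O_{U_{\Sigma}}(D_i)$ via global generation plus Demazure vanishing, whereas your identification of $U_{\Sigma}$ with the total space of $\mathcal O_{\mathbb P^{n-1}}(-3)$ and of all invariant divisor sheaves with pullbacks $\pi^{\ast}\mathcal O_{\mathbb P^{n-1}}(d)$ disposes of everything at once using $n-1\geq 2$.

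Case (2), however, contains a genuine gap: you set up the combinatorial machinery of \cite[Theorem 9.1.3]{CLS11} and then explicitly defer the verification that every $V_{D,m}$ is empty or connected, calling it ``the main obstacle.'' That verification (or some substitute for it) is precisely the content of the proposition for the Danilov resolution, and nothing in your write-up actually establishes it. The only genuinely delicate point is the vanishing of $H^1\big(E,\mathcal O_E(E)\big)$ for the exceptional divisor $E\simeq\mathbb P\big(\mathcal O\oplus\mathcal O(2)\big)$ over $\mathbb P^{n-2}$: unlike $E'\simeq\mathbb P^{n-1}$ and the nef divisors $D_i$, this is not a formal consequence of connectivity of convex sets or of Demazure vanishing, and your ``cross-check'' via formal functions is likewise only announced. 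The paper resolves this point by computing the normal bundle explicitly from the linear equivalence $0\sim_{lin}\divi(3e_2)=3D_2+E+2E'$, obtaining $\mathcal O_E(E)\simeq p_r^{\ast}\mathcal O_{\mathbb P^{n-2}}(-3)\otimes\mathcal O_E(-2E')$, and then combining this with $K_E\simeq p_r^{\ast}\mathcal O_{\mathbb P^{n-2}}(-n-1)\otimes\mathcal O_E(-2E')$ from Proposition \ref{projectivbundle}, Serre duality and the projection formula to get $H^1\big(E,\mathcal O_E(E)\big)^{\vee}\simeq H^{n-2}\big(\mathbb P^{n-2},\mathcal O_{\mathbb P^{n-2}}(-n+2)\big)=0$. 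Some computation of this kind must be supplied before your argument for case (2) can be considered a proof.
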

 
\begin{rem}

Corollary \ref{1} and Proposition \ref{2} conclude the proof of Proposition \ref{resproposition}.
\end{rem}

\begin{proof}
1) Let  $\rho \colon U_{\Sigma} \to U$ be  the toric blowup of $\frac{1}{3}(1,\ldots,1)$.  The  Euler sequence on $U_{\Sigma}$ reads 
\[
0 \to \mathcal O_{U_{\Sigma}} \to \bigoplus_{i=1}^n  \mathcal O_{U_{\Sigma}}(D_i)  \oplus \mathcal O_{U_{\Sigma}}(E) \to \Theta_{U_{\Sigma}} \to 0.  
\]
Since $U$ has rational singularities, we obtain an isomorphism 
\[
\bigoplus_{i=1}^n  R^1 \rho_{\ast} \mathcal O_{U_{\Sigma}}(D_i)  \oplus R^1 \rho_{\ast}\mathcal O_{U_{\Sigma}}(E) 
\simeq R^1 \rho_{\ast} \Theta_{U_\Sigma}. 
\]
We need to prove that $R^1 \rho_{\ast}\mathcal O_{U_{\Sigma}}(E) =0$ and 
$R^1 \rho_{\ast} \mathcal O_{U_{\Sigma}}(D_i) =0$  for all  $i$.  
The sequence 
\[
0 \to \mathcal O_{U_{\Sigma}} \to \mathcal O_{U_{\Sigma}}(E) \to \mathcal O_E(E) \to 0
\]
gives us the isomorphism $$R^1 \rho_{\ast} \mathcal O_{U_{\Sigma}}(E) \simeq  R^1 \rho_{\ast} \mathcal O_E(E).$$ 
Since $E \simeq \mathbb P^{n-1}$, the normal bundle $\mathcal O_E(E)$ is a multiple of $\mathcal O_{\mathbb P^{n-1}}(1)$.  Consequently, since $n\geq 3$, its
 first cohomology vanishes, which implies the vanishing of $R^1 \rho_{\ast} \mathcal O_E(E)$, too. 
To show that $R^1 \rho_{\ast} \mathcal O_{U_{\Sigma}}(D_i)=0$, we use the Cartier data of  $D_i$. 
By symmetry, we may assume $i=1$. 
The Cartier data of $D_1$ is the collection of the vectors:
\[
u(\sigma_1)=0 \quad \makebox{and} \quad u(\sigma_i)=-e_1 + e_i
\quad \makebox{for} 
\quad 2 \leq i \leq n.
\]
According to  \cite[Proposition 6.1.1]{CLS11} the sheaf $\mathcal O_{U_{\Sigma}}(D_1)$ is globally generated, since all  of the vectors 
$u(\sigma_i)$ are contained in the polyhedron associated to $D_1$: 
\[
P_{D_1}=\lbrace x \in \mathbb R^n ~ \big\vert ~ x_1 \geq -1, ~ x_2 \geq 0, ~ \ldots ~ ,  x_n \geq 0, ~ x_1+ \ldots +x_n \geq 0 \rbrace.
\]
Demazure vanishing \cite[Theorem 9.2.3]{CLS11} tells us that the higher cohomology groups of  
$\mathcal O_{U_{\Sigma}}(D_1)$ vanish. 
We conclude in particular the vanishing of $R^1 \rho_{\ast} \mathcal O_{U_{\Sigma}}(D_1)$, because $U$ is affine.  \newline

\smallskip
2) Let   $\rho \colon U_{\Sigma} \to U$ be  the Danilov resolution of $\frac{1}{3}(1,\ldots,1,2)$. By the Euler sequence $R^1\rho_{\ast} \Theta_{U_{\Sigma}}=0$ if and only if 
\begin{enumerate}
\item 
$R^1 \rho_{\ast}\mathcal O_{E}(E)=0$,
\item
$R^1 \rho_{\ast}\mathcal O_{E'}(E')=0$ and 
\item
$R^1 \rho_{\ast} \mathcal O_{U_{\Sigma}}(D_i)=0$ for all  $i$. 
\end{enumerate}
Since $E' \simeq \mathbb P^{n-1}$, the normal bundle  $\mathcal O_{E'}(E') $ is a multiple of   $\mathcal O_{\mathbb P^{n-1}}(1)$ and 
$R^1 \rho_{\ast}\mathcal O_{E'}(E')$ is zero by the same argument as above. 
The vanishing of $R^1 \rho_{\ast} \mathcal O_{U_{\Sigma}}(D_i)$ is also shown as before, using  Demazure's theorem.
For the remaining sheaf $R^1 \rho_{\ast}\mathcal O_{E'}(E')$ we proceed as follows: by Proposition \ref{projectivbundle} $E$ is the projective bundle 
\[
p_r \colon E  \simeq \mathbb P(\mathcal O \oplus \mathcal O(2)) \to \mathbb P^{n-2}.
\] 
Using the linear equivalence 
\[
0 \sim_{lin} \divi(3e_2)= 3D_2 + E + 2E',
\]
 we can rewrite the normal bundle $\mathcal O_E(E)$ in the following way: 
\[ 
\mathcal O_E(E) \simeq \mathcal O_E(-3D_2) \otimes \mathcal O_E(-2E') \simeq p_r^{\ast} \mathcal O_{ \mathbb P^{n-2}}(-3) 
\otimes \mathcal O_E(-2E').
\] 
Serre duality on $E$ and the projection formula implies:
\begin{align*}
	H^1\big(E,\mathcal O_E(E)\big)^{\vee} \simeq & ~ H^{n-2}\big(E, p_r^{\ast} \mathcal O_{ \mathbb P^{n-2}}(-n+2)\big)  \\
	 \simeq & ~ H^{n-2}\big(\mathbb P^{n-2}, \mathcal O_{ \mathbb P^{n-2}}(-n+2)\big) =0.\end{align*}
\end{proof}

\begin{biblist}

\bib{rigidity}{article}{
   author={Bauer, Ingrid},
   author={Catanese, Fabrizio},
   title={On rigid compact complex surfaces and manifolds},
   journal={Adv. Math.},
   volume={333},
   date={2018},
   pages={620--669},
   issn={0001-8708},
   review={\MR{3818088}},
   doi={10.1016/j.aim.2018.05.041},
}
\bib{notinfinitesimally}{article}{
   author={Bauer, Ingrid},
   author={Pignatelli, Roberto},
    title={Rigid but not infinitesimally rigid compact complex manifolds},
   eprint={arXiv:1805.02559 [math.AG]},
   date={2018},
    pages={18}
}
\bib{beauville}{article}{
   author={Beauville, Arnaud},
   title={Some remarks on K\"ahler manifolds with $c_{1}=0$},
   conference={
      title={Classification of algebraic and analytic manifolds},
      address={Katata},
      date={1982},
   },
   book={
      series={Progr. Math.},
      volume={39},
      publisher={Birkh\"auser Boston, Boston, MA},
   },
   date={1983},
   pages={1--26},
   review={\MR{728605}},
   doi={10.1007/BF02592068},
}
\bib{burnswahl}{article}{
    AUTHOR = {Burns, Jr., D. M. and Wahl, Jonathan M.},
     TITLE = {Local contributions to global deformations of surfaces},
   JOURNAL = {Invent. Math.},
  FJOURNAL = {Inventiones Mathematicae},
    VOLUME = {26},
      YEAR = {1974},
     PAGES = {67--88},
      ISSN = {0020-9910},
   MRCLASS = {14D15 (14B05 32G10)},
  MRNUMBER = {349675},
MRREVIEWER = {O. Riemenschneider},
       DOI = {10.1007/BF01406846},
       URL = {https://doi.org/10.1007/BF01406846},
}
\bib{catQED}{article}{
   author={Catanese, Fabrizio},
   title={Q.E.D. for algebraic varieties},
   note={With an appendix by S\"{o}nke Rollenske},
   journal={J. Differential Geom.},
   volume={77},
   date={2007},
   number={1},
   pages={43--75},
   issn={0022-040X},
   review={\MR{2344354}},
}
\bib{CLS11}{book}{
   author={Cox, David A.},
   author={Little, John B.},
   author={Schenck, Henry K.},
   title={Toric varieties},
   series={Graduate Studies in Mathematics},
   volume={124},
   publisher={American Mathematical Society, Providence, RI},
   date={2011},
   pages={xxiv+841},
   isbn={978-0-8218-4819-7},
   review={\MR{2810322}},
   doi={10.1090/gsm/124},
}
\bib{Elk99}{article}{
   author={Elkies, Noam D.},
   title={The Klein quartic in number theory},
   conference={
      title={The eightfold way},
   },
   book={
      series={Math. Sci. Res. Inst. Publ.},
      volume={35},
      publisher={Cambridge Univ. Press, Cambridge},
   },
   date={1999},
   pages={51--101},
   review={\MR{1722413}},
}
\bib{F93}{book}{
    AUTHOR = {Fulton, William},
     TITLE = {Introduction to toric varieties},
    SERIES = {Annals of Mathematics Studies},
    VOLUME = {131},
      NOTE = {The William H. Roever Lectures in Geometry},
 PUBLISHER = {Princeton University Press, Princeton, NJ},
      YEAR = {1993},
     PAGES = {xii+157},
      ISBN = {0-691-00049-2},
   MRCLASS = {14M25 (14-02 14J30)},
  MRNUMBER = {1234037},
MRREVIEWER = {T. Oda},
       DOI = {10.1515/9781400882526},
       URL = {https://doi.org/10.1515/9781400882526},
}

\bib{GriffH}{book}{
   author={Griffiths, Phillip},
   author={Harris, Joseph},
   title={Principles of algebraic geometry},
   series={Wiley Classics Library},
   note={Reprint of the 1978 original},
   publisher={John Wiley \& Sons, Inc., New York},
   date={1994},
   pages={xiv+813},
   isbn={0-471-05059-8},
   review={\MR{1288523}},
   doi={10.1002/9781118032527},
}
\bib{Klein}{article}{
   author={Klein, Felix},
   title={\"Uber die Transformationen siebenter Ordnung der elliptischen Funktionen},
   journal={Math. Annalen},
   volume={14},
   date={1879},
   pages={428--471}
}

\bib{inoue}{article}{
   author={Inoue, Masahisa},
   title={On surfaces of Class ${\rm VII}_{0}$},
   journal={Invent. Math.},
   volume={24},
   date={1974},
   pages={269--310},
   issn={0020-9910},
   review={\MR{0342734}},
}

\bib{kur1}{article}{
   author={Kuranishi, M.},
   title={On the locally complete families of complex analytic structures},
   journal={Ann. of Math. (2)},
   volume={75},
   date={1962},
   pages={536--577},
   issn={0003-486X},
   review={\MR{0141139}},
}
\bib{kur2}{article}{
   author={Kuranishi, M.},
   title={New proof for the existence of locally complete families of
   complex structures},
   conference={
      title={Proc. Conf. Complex Analysis},
      address={Minneapolis},
      date={1964},
   },
   book={
      publisher={Springer, Berlin},
   },
   date={1965},
   pages={142--154},
   review={\MR{0176496}},
}

\bib{kodairamorrow}{book}{
   author={Morrow, James},
   author={Kodaira, Kunihiko},
   title={Complex manifolds},
   publisher={Holt, Rinehart and Winston, Inc., New York-Montreal,
   Que.-London},
   date={1971},
   pages={vii+192},
   review={\MR{0302937}},
}
\bib{pinkham}{article}{
    AUTHOR = {Pinkham, Henry},
     TITLE = {Some local obstructions to deforming global surfaces},
      NOTE = {Leopoldina Symposium: Singularities (Th\"{u}ringen, 1978)},
   JOURNAL = {Nova Acta Leopoldina (N.F.)},
  FJOURNAL = {Nova Acta Leopoldina. Abhandlungen der Deutschen Akademie der
              Naturforscher Leopoldina. Neue Folge},
    VOLUME = {52},
      YEAR = {1981},
    NUMBER = {240},
     PAGES = {173--178},
      ISSN = {0369-5034},
   MRCLASS = {14J10},
  MRNUMBER = {642704},
MRREVIEWER = {J. A. Morrow},
}
\bib{R87}{article}{
   author={Reid, Miles},
   title={Young person's guide to canonical singularities},
   conference={
      title={Algebraic geometry, Bowdoin, 1985},
      address={Brunswick, Maine},
      date={1985},
   },
   book={
      series={Proc. Sympos. Pure Math.},
      volume={46},
      publisher={Amer. Math. Soc., Providence, RI},
   },
   date={1987},
   pages={345--414},
   review={\MR{927963}},
}
		
\bib{schlessinger}{article}{
    AUTHOR = {Schlessinger, Michael},
     TITLE = {Rigidity of quotient singularities},
   JOURNAL = {Invent. Math.},
  FJOURNAL = {Inventiones Mathematicae},
    VOLUME = {14},
      YEAR = {1971},
     PAGES = {17--26},
      ISSN = {0020-9910},
   MRCLASS = {32G05},
  MRNUMBER = {292830},
MRREVIEWER = {F. Oort},
       DOI = {10.1007/BF01418741},
       URL = {https://doi.org/10.1007/BF01418741},
}
		\end{biblist}

{\tiny MATHEMATISCHES INSTITUT, UNIVERSIT\"AT BAYREUTH, 95440 BAYREUTH, GERMANY}

{\scriptsize\emph{E-mail address: Ingrid.Bauer@uni-bayreuth.de}} \quad {\scriptsize\emph{Christian.Gleissner@uni-bayreuth.de}}

\end{document}